 \documentclass[10pt,twocolumn,twoside]{IEEEtran}

\usepackage{cite}
\usepackage{amsmath,amssymb,amsfonts}
\usepackage{algorithmic}
\usepackage{hyperref}
\usepackage{textcomp}
\usepackage{graphicx,color}
\usepackage{mathrsfs}
\usepackage{subfigure}
\usepackage{url}
\usepackage{color}
\usepackage{dsfont}
\usepackage{bbm}
\usepackage{booktabs}
\usepackage{array}
\usepackage[table]{xcolor}
\usepackage{yfonts}

\newtheorem{theorem}{Theorem}[section]
\newtheorem{lemma}[theorem]{Lemma}

\newtheorem{remark}{Remark}

\newtheorem{example}{Example}

\newcommand{\setdef}[2]{\{#1 \; : \; #2\}}
\newcommand{\subscr}[2]{{#1}_{\textup{#2}}}

\newcommand{\until}[1]{\{1,\dots,#1\}}

\newcommand{\blkdiag}{\mathrm{blk}\text{-}\mathrm{diag}}

\newcommand*\dif{\mathop{}\!\mathrm{d}} 

\newcommand{\real}{\mathbb{R}}

\newcommand{\transpose}{\mathsf{T}} 

\newcommand{\mc}{\mathcal}

\newcommand{\1}{\mathds{1} }

\DeclareSymbolFont{bbold}{U}{bbold}{m}{n}
\DeclareSymbolFontAlphabet{\mathbbold}{bbold}

%

\newcommand\oprocendsymbol{\hbox{$\square$}}
\newcommand\oprocend{\relax\ifmmode\else\unskip\hfill\fi\oprocendsymbol}


\newcommand*{\QEDA}{\hfill\ensuremath{\blacksquare}}%
\newenvironment{pfof}[1]{\vspace{1ex}\noindent{\itshape Proof of
    #1:}\hspace{0.5em}} {\hfill\QEDA\vspace{1ex}}
\newcommand{\QED}{\hfill \mbox{\raggedright \rule{.1in}{.1in}}}
\newenvironment{proof}{\vspace{1ex}\noindent{\itshape Proof:}\hspace{0.5em}}
{\hfill\QED\vspace{1ex}}

\begin{document}
\title{Stability Conditions for Cluster Synchronization in Networks of Heterogeneous Kuramoto Oscillators} \author{Tommaso Menara, Giacomo Baggio, Danielle S. Bassett, and Fabio Pasqualetti 
\thanks{This material is based upon
    work supported in part by ARO award 71603NSYIP, and in part by NSF
    awards BCS1430279 and BCS1631112.} \thanks{ Tommaso Menara,
    Giacomo Baggio and Fabio Pasqualetti are with the Department of
    Mechanical Engineering, University of California at Riverside,
    \{\href{mailto:tomenara@engr.ucr.edu}{\texttt{tomenara}},
    \href{mailto:gbaggio.ucr.edu}{\texttt{gbaggio}},
    \href{mailto:fabiopas@engr.ucr.edu}{\texttt{fabiopas\}@engr.ucr.edu.}} Danielle
    S. Bassett is with the Department of Bioengineering, the
    Department of Electrical and Systems Engineering, the Department of Physics and Astronomy,
    the Department of Psychiatry, and the Department of Neurology, University of
    Pennsylvania,
    \href{mailto:mailto:dsb@seas.upenn.edu}{\texttt{dsb@seas.upenn.edu.}}}}

\maketitle

\begin{abstract}
  In this paper we study cluster synchronization in networks of
  oscillators with heterogenous Kuramoto dynamics, where
  multiple groups of oscillators with identical phases coexist in a
  connected network. Cluster synchronization is at the basis of
  several biological and technological processes; yet the
    underlying mechanisms to enable cluster synchronization of
    Kuramoto oscillators have remained elusive. In this paper we
  derive quantitative conditions on the network weights, cluster
  configuration, and oscillators' natural frequency that ensure
  asymptotic stability of the cluster synchronization manifold; that
  is, the ability to recover the desired cluster synchronization
  configuration following a perturbation of the oscillators'
  states. Qualitatively, our results show that cluster synchronization
  is stable when the intra-cluster coupling is sufficiently stronger
  than the inter-cluster coupling, the natural frequencies of the
  oscillators in distinct clusters are sufficiently different, or, in
  the case of two clusters, when the intra-cluster dynamics is
  homogeneous. We illustrate and validate the effectiveness of our
  theoretical results via numerical studies.
  \end{abstract}

\begin{IEEEkeywords}
  Biological neural network, limit cycle, network theory, nonlinear
  dynamical systems, stability.
\end{IEEEkeywords}

\section{Introduction}
\IEEEPARstart{S}{ynchronization} refers broadly to patterns of
coordinated activity that arise spontaneously or by design in several
natural and man-made systems
\cite{MG-MEJN:02,AP-MR-JK:03,Lewis2014}. Examples include coherent
firing of neuronal populations in the brain \cite{JC-EH-OS-GD:11},
coordinated flashing of fireflies \cite{Moiseff2000}, flocking of
birds \cite{Giardina2008}, exchange of signals in wireless networks
\cite{OS-US-YBN-SS:08}, consensus in multi-agent systems
\cite{FG-LS:10}, and power generation in the smart grid
\cite{FD-MC-FB:13}. Synchronization enables complex functions: while
some systems require complete (or full) synchronization among all the
components in order to function properly, others rely on cluster (or
partial) synchronization, where different groups exhibit different,
yet synchronized, internal behaviors~\cite{FS-EO:07}.

While studies of full synchronization are numerous and have generated
a rich literature, e.g., see \cite{SB-VL-YM-MC-DUH:06, NC-MWS:08,
  AA-ADG-JK-YM-CZ:08}, conditions explaining the onset of cluster
synchronization and its properties are less well
  understood. Such conditions are necessary for the analysis and,
more importantly, the control of synchronized activity
across biological \cite{KK:94,
    LS-RO-BB-AH-BC:02, IB-MH:11} and technological  \cite{JRT-KST-DJD-GDV-SZ-PA-RR:99} systems. For instance, a deeper
understanding of the mechanisms enabling cluster synchronization might
not only shed light on the nature of the healthy human brain
\cite{AS-JG:05}, but also enable and guide targeted interventions for
patients with neurological disorders, such as epilepsy
\cite{Lehnertz2009} and Parkinson's disease \cite{Hammond2007}.

We study cluster synchronization in networks of
  oscillators with Kuramoto dynamics \cite{Kuramoto1975}, which,
  despite their apparent simplicity, are particularly suited to
  represent complex synchronization phenomena in neural systems
  \cite{AD-VWB:11}, as well as in many other natural and technological
  systems \cite{FD-MC-FB:13}. Although our study and modeling choices
are guided by the practical need to understand and control patterns of
synchronized functional activity in the human brain, as they naturally
arise in healthy and diseased populations
\cite{GD-VK-ARM-OS-RK:09,FV-MS-PJH-GS-JC-RL:15}, in this paper we
focus on developing the mathematical foundations of a quantitative
approach to the analysis and control of cluster synchronization in a
weighted network of  Kuramoto oscillators.
In particular, we derive conditions on the oscillators'
  coupling and their natural frequencies that guarantee the stability
  of an arbitrary cluster configuration.

\noindent\textbf{Related work} Cluster
synchronization, where multiple synchronized groups of oscillators
coexist in a connected network, is an exciting phenomenon that has
attracted the attention of the physics, dynamical systems, and
controls communities, among others. Existing work on this topic has
shown that cluster-synchronized states can be linked to the existence
of certain network symmetries \cite{VNB-IB-MH:00, AYP-HS-HN:02,
  IB-VNB-KN-MH:03, IS-MG-MP:03, AYP:08} or symmetries in the nodes'
dynamics \cite{DF-GR-MDB:17}.  More recently, in \cite{Pecora2014,
  Sorrentino2016}, the stability of cluster states corresponding to
network symmetries is addressed with the Master Stability Function
approach \cite{MLP-TLC:98}. In contrast to this previous work,
\cite{GR-JJES:11} combines network symmetries with contraction
analysis to study the stability of synchronized states. Further
studies relating contraction properties and cluster synchronization
are conducted in \cite{QCP-JJS:07,ZA-BD-END-NEL:17}. Finally, control
algorithms for cluster synchronization are developed in
\cite{WW-WZ-TC:09, WL-BL-TC:10}. To the best of our knowledge,
however, the above studies are not applicable to oscillators with
Kuramoto dynamics, which we study in this work.

A few papers have studied cluster synchronization of Kuramoto
oscillators. Specifically, in \cite{CF-AC-FP:17,YQ-YK-MC:18} the
authors provide invariance conditions for an approximate definition of
cluster synchronization and for particular types of
networks. Invariance of exact cluster synchronization, which is the
notion used in this paper, is also studied in
\cite{Schaub2016,LT-CF-MI-DSB-FP:17}. Stability of exact cluster
synchronization is investigated in \cite{IVB-BNB-VNB:16} where,
however, only the restrictive case of two clusters for identical
Kuramoto oscillators with inertia is considered, and in
\cite{YSC-TN-AEM:17}, where only implicit and numerical stability
conditions are provided. To the best of our
knowledge, our work presents the first explicit analytical conditions
for the (local) stability of the cluster synchronization manifold in
sparse and weighted networks of heterogeneous Kuramoto oscillators.

\noindent\textbf{Paper contribution} The main contribution of this
paper is to characterize conditions for the stability of cluster
synchronization in networks of oscillators with Kuramoto dynamics. We
consider a notion of exact cluster synchronization, where the phases
of the oscillators within each cluster remain equal to each other over
time, and different from the phases of the oscillators in the other
clusters. We derive three conditions. First, we show that the cluster
synchronization manifold is locally exponentially stable when the
intra-cluster coupling is sufficiently stronger than the inter-cluster
coupling. We quantify this tradeoff using the theory of perturbation
for dynamical systems together with the invariance properties of
cluster synchronization. Second, through a Lyapunov argument, we show
that the cluster synchronization manifold is locally exponentially
stable when the natural frequencies of the oscillators in disjoint
clusters are sufficiently different (in their limit to
infinity). Third, we focus on the case of two clusters, and provide a
quantitative condition on the network weights and oscillators' natural
frequency for the stability of the cluster synchronization
manifold. This analysis shows that asymptotic stability of the cluster
synchronization manifold is guaranteed for weak inter-cluster weights,
sufficiently different natural frequencies, or even homogeneous
intra-cluster configurations.

As minor contributions, we provide examples showing that
  network symmetries are not necessary for cluster synchronization of
  Kuramoto oscillators, and a sufficient condition guaranteeing the
  absence of stable synchronization submanifolds.

\noindent\textbf{Paper organization} The rest of the paper is
organized as follows. Section \ref{sec: setup} contains our problem
setup and some preliminary notions. Section \ref{sec: section 3}
contains our main results; that is, our conditions for the stability
of the cluster synchronization manifold in Kuramoto networks. Finally,
section \ref{sec: conclusion} concludes the paper, and the Appendix
contains the proofs of our results.

\noindent\textbf{Mathematical notation} The set $\real_{>0}$ (resp. $\real_{<0}$) denotes the positive
(resp. negative) real numbers, whereas the sets $\mathbb{S}^1$ and
$\mathbb{T}^n$ denote the unit circle and the $n$-dimensional torus,
respectively. The vector of all ones is represented by $\1$. We let
$O(f)$ denote the order of the function $f$. Further, we denote a
positive (resp. negative) definite matrix $A$ with $A\succ 0$ (resp.
$A\prec 0$). We indicate the smallest (resp. largest) eigenvalue of a
symmetric matrix with $\lambda_\text{min}(\cdot)$ (resp. 
$\lambda_\text{max}(\cdot)$). A \mbox{(block-)diagonal} matrix
is represented by ($\mathrm{blk}\text{-})\mathrm{diag}(\cdot)$. We let $\| \cdot \|$
denote the $\ell^2$-norm, and $\mathsf{i} = \sqrt{-1}$.  Finally, $A^\dagger$ represents the
Moore-Penrose pseudoinverse of the matrix $A$.

\section{Problem setup and preliminary notions}\label{sec: setup} In
this work we characterize the stability properties of certain
synchronized trajectories arising in networks of oscillators with
Kuramoto dynamics. To this aim, let $\mc G = (\mc V, \mc E)$ be the
connected and weighted graph representing the network of oscillators,
where $\mc V = \until{n}$ and $\mc E \subseteq \mc V \times \mc V$
represent the oscillators, or nodes, and their interconnection edges,
respectively. Let $A = [a_{ij}]$ be the weighted adjacency matrix of
$\mc G$, where $a_{ij} \in \real_{> 0}$ is the weight of the edge
$(i , j) \in \mc E$, and $a_{ij} = 0$ when $(i, j) \not\in \mc E$. The
dynamics of $i$-th oscillator~is
\begin{align}\label{eq: kuramoto}
  \dot \theta_i = \omega_i + \sum_{j \neq i} a_{ij} \sin(\theta_{j}-\theta_{i}),
\end{align}
where $\omega_i \in \real_{> 0}$ and $\theta_i \in \mathbb{S}^1$ denote
the natural frequency and the phase of the $i$-th oscillator. Unless specified differently, we assume that the edge weights are symmetric. That is,
\begin{itemize}
\item[(A1)] The network adjacency matrix satisfies $A = A^\transpose$.
\end{itemize}
Assumption (A1) is typical in the study of (cluster) synchronization
in networks of Kuramoto oscillators, e.g., see \cite{AJ-NM-MB:04,
  FD-FB:12, Doerfler2014}, as it facilitates the derivation of
stability results.  While relaxing this assumption is beyond the scope
of this work, we will discuss how our stability results can also be
applied to study cluster synchronization with asymmetric network
weights (see Remark \ref{remark: asymmetric}). Finally, since the
diagonal entries of the adjacency matrix $A$ do not contribute to the
dynamics in \eqref{eq: kuramoto}, we assume that $\mc G$ does not
contain self-loops.

A network exhibits cluster synchronization when the oscillators can be
partitioned so that the phases of the oscillators in each cluster
evolve identically. To be precise, let
$\mc P = \{\mc P_1, \dots, \mc P_m\}$, with $m >1$, be a partition of
$\mc V$, where $\bigcup_{i=1}^m \mc P_i = \mc V$ and
$\mc P_i \cap \mc P_j = \emptyset$ if $i \neq j$. Define the
\emph{cluster synchronization
  manifold} associated with the partition $\mc P$ as
\begin{align*}
  \mc S_{\mc P} = \setdef{\theta \in \mathbb{T}^n }{ \theta_i =
  \theta_j  \text{ for all } i,j \in \mc P_k, \,k = 1,\dots,m} .
\end{align*}
Then, the network is cluster-synchronized with partition $\mc P$ when
the phases of the oscillators belong to $\mc S_{\mc P}$ at all times.

In this paper we characterize conditions on the network weights and
the oscillators' natural frequency that guarantee \emph{local
  exponential stability} of the cluster synchronization manifold
$\mc S_{\mc P}$, for a given partition $\mc P$.\footnote{Loosely
  speaking, the manifold $\mc S_{\mc P}$ is locally exponentially
  stable if $\theta$ converge to $\mc S_{\mc P}$ exponentially fast
  when $\theta (0)$ is sufficiently close to~$\mc S_{\mc P}$.}  In
order to study stability of the cluster synchronization manifold, we
assume $\mc S_{\mc P}$ to be invariant \cite[Chapter
3]{ANM-LH-DR:08}.\footnote{The manifold $\mc S_{\mc P}$ is invariant
  if $\theta(0) \in \mc S_{\mc P}$ implies $\theta \in \mc S_{\mc P}$
  at all times.} In particular, following \cite{LT-CF-MI-DSB-FP:17},
invariance of $\mc S_{\mc P}$ is guaranteed by the following
conditions:
\begin{itemize}
\item[(A2)] Given $\mc P = \{\mc P_1, \dots , \mc P_m\}$, the natural
  frequencies satisfy $\omega_i = \omega_j$ for every
  $i,j \in \mc P_k$ and $k~\in~\until{m}$,\footnote{This
      condition is necessary for $\mc S_{\mc P}$ to be forward
      invariant, and thus stable \cite{LT-CF-MI-DSB-FP:17}, and is
      motivated by observed synchronization phenomena, e.g., see
      \cite{DM-MGP-CDG-GLR-MC:07}.} and
  
\item[(A3)]  The network weights satisfy
  $\sum_{k \in \mc P_\ell} a_{i k} - a_{j k} = 0$ for every
  $i,j \in \mc P_z$ and $z,\ell \in \until{m}$, with $z \neq \ell$.
\end{itemize}
Thus, in the remainder of the paper we assume that (A2) and (A3) are
satisfied for the network partition being considered.

  \begin{figure}[t]
  \centering 
  \subfigure[]
  {
    \includegraphics[width=0.405\columnwidth]{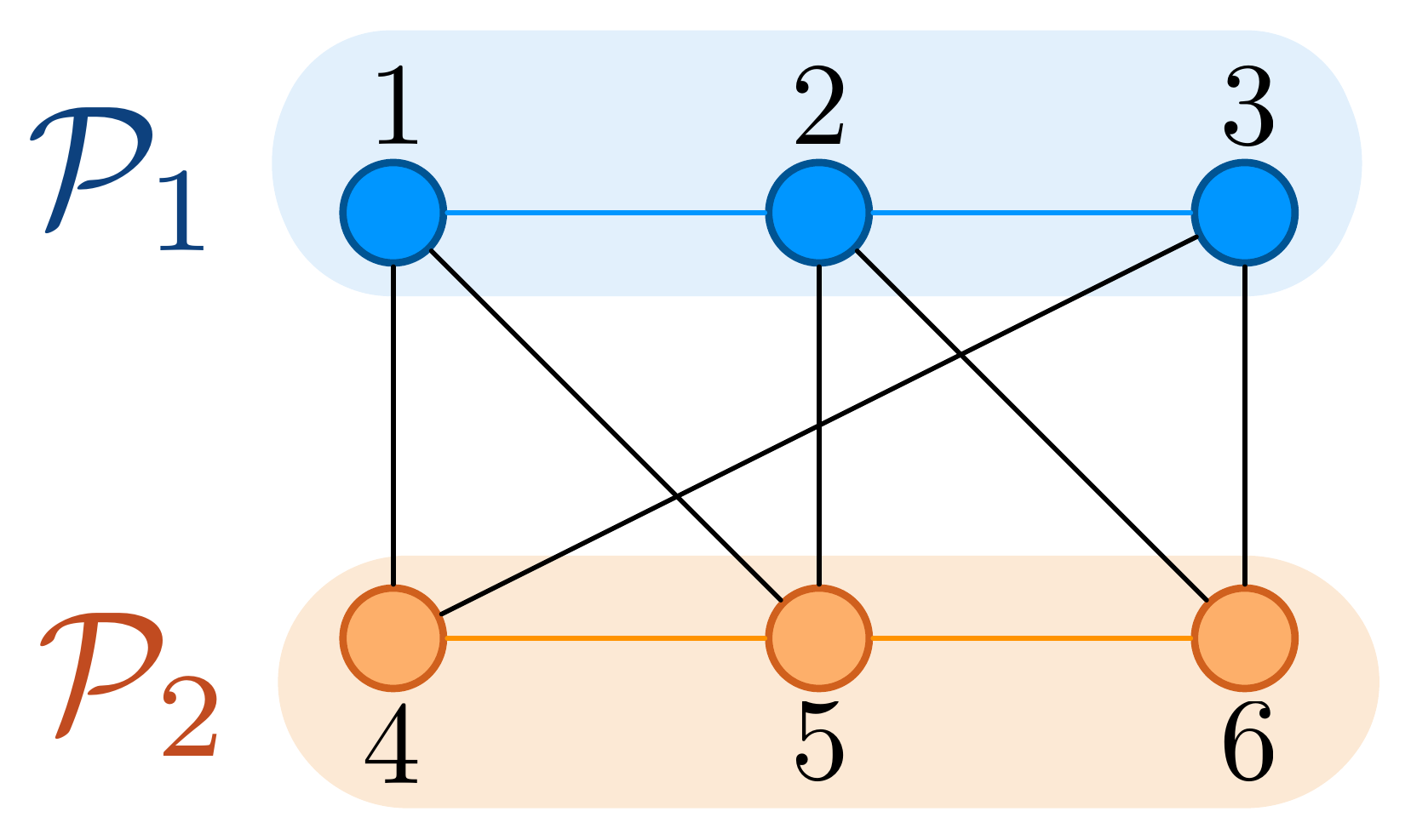}
  \label{fig: aa}
  }
    \;
    \subfigure[]{
      \includegraphics[width=.48\columnwidth]{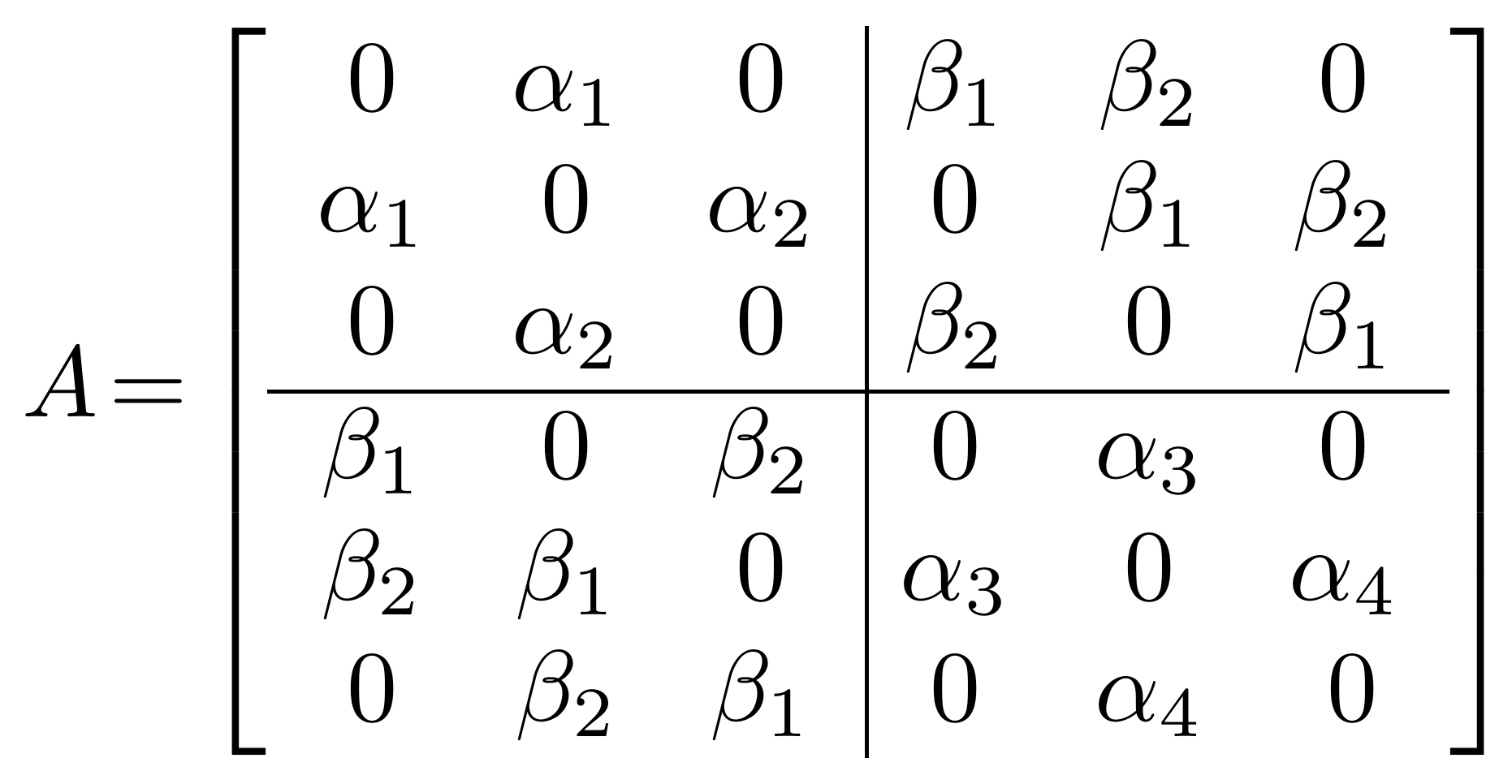}
  \label{fig: bb}
  }
  \caption{Fig. \ref{fig: aa} illustrates a network of
      $6$ oscillators with adjacency matrix as in Fig. \ref{fig:
        bb}. In this network, the partition
      $\mc P=\{\mc P_1, \mc P_2\}$, which satisfies Assumption (A3),
      cannot be identified by group symmetries of the network for any
      choice of the positive weights $\alpha_1$, $\alpha_2$,
      $\alpha_3$, $\alpha_4$, $\beta_1$ and $\beta_2$. The manifold $\mc S_{\mc P}$ is invariant whenever the oscillators' natural frequencies satisfy Assumption (A2). Thus, this example shows that network symmetries are not necessary for
      cluster synchronization of Kuramoto oscillators.
      }
\label{fig: group symmetry}
\end{figure}

\begin{figure}[t]
  \centering 
  \subfigure[]
  {
     \includegraphics[width=.401\columnwidth]{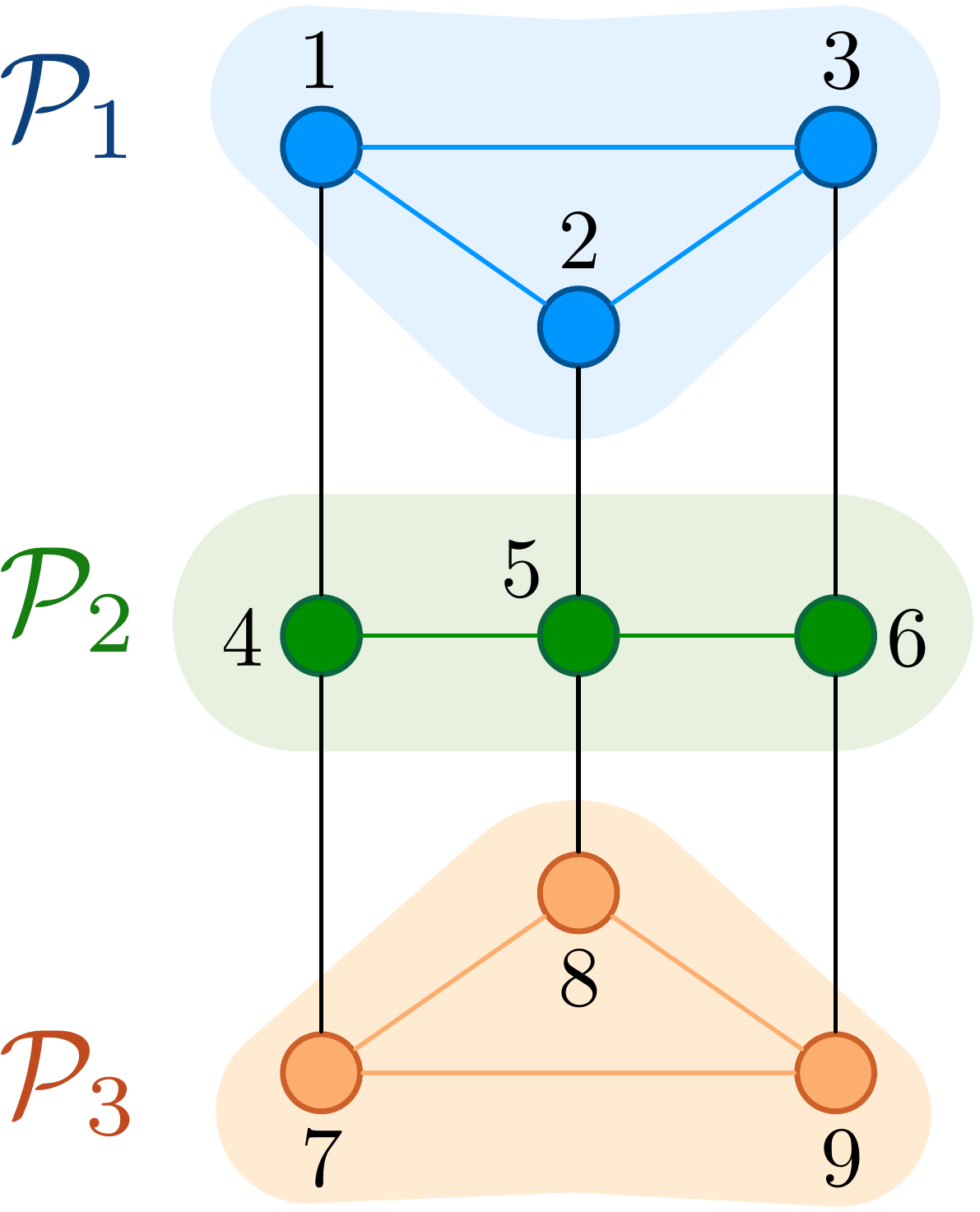}
  \label{fig: quotient_graph}
  }\;
    \subfigure[]{
       \includegraphics[width=.49\columnwidth]{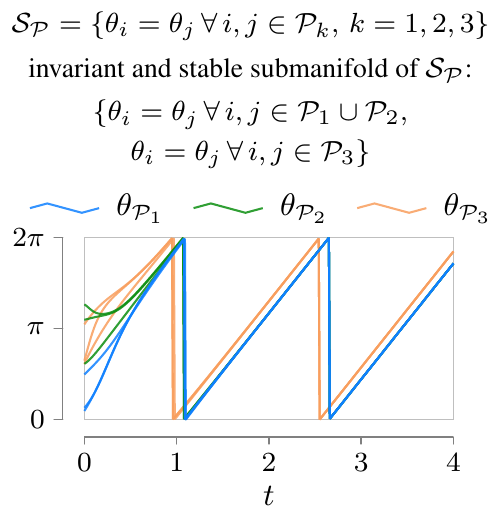}
  \label{fig: evolution submanifold}
  }
  \caption{Fig. \ref{fig: quotient_graph} illustrates a
      network with partition $\mc P = \{ \mc P_1, \mc P_2, \mc
      P_3\}$. As shown in Fig. \ref{fig: evolution submanifold}, the
      phases of the oscillators in $\mc P_1$ and $\mc P_2$ have the
      same value over time, showing that a submanifold of
      $\mc S_{\mc P}$ is invariant and stable. For this simulation, we
      use $\omega_1 = 4$, $\omega_2 = 2$, $\omega_3 = 6$,
      $a_{14} = 3$, and $a_{47} = 5$.}  \label{fig:
    quotient_graph_example}
\end{figure}

\begin{remark}{\bf \emph{(Network symmetries, equitable partitions,
      and balanced weights)}}\label{remark: balanced vs symmetries}
  Conditions to ensure the invariance of the cluster synchronization
  manifold have been linked to network symmetries, which are defined
  by the group comprising all node permutations that leave the network
  topology unchanged, e.g., see \cite{Pecora2014, Sorrentino2016,
    YSC-TN-AEM:17}. In Fig.~\ref{fig: group symmetry} we propose a
  network with two clusters, which are not defined by any group
  symmetry, that satisfies Assumption (A3) and thus admits an
  invariant cluster synchronization manifold. This example shows that
  cluster synchronization of Kuramoto oscillators does not require
  symmetric networks. Our Assumption (A3), and in fact the equivalent
  notion of external equitable partition \cite{Schaub2016}, is less
  restrictive than requiring partitions satisfying group symmetries
  \cite{ZM-ZL-GZ:06, Belykh2008, ABS-LP-JDH-FS:18}. Finally,
  Assumptions (A2) and (A3) are necessary when the natural frequencies
  in distinct clusters are sufficiently different (see
  \cite{LT-CF-MI-DSB-FP:17} and Remark \ref{remark: submanifolds}).
  \oprocend
\end{remark}

  \begin{remark}{\bf \emph{(Invariance of submanifolds of
        $\mc S_{\mc P}$)}}\label{remark: submanifolds}
    When the network of oscillators is cluster-synchronized
    (i.e. $\theta(t) \in \mc S_{\mc P}$ for all $t\ge 0$),
    submanifolds of $\mc S_{\mc P}$ may appear whenever the phases
    belonging to two (or more) disjoint clusters have equal
    values (see Fig.~\ref{fig: quotient_graph_example}). Interestingly,
    the example in Fig.~\ref{fig: quotient_graph_example} also points
    out that Assumption (A3) may not be necessary for the invariance
    of $\mc S_{\mc P}$ if the clusters do not evolve with different
    frequencies (see Assumption (A1) in \cite{LT-CF-MI-DSB-FP:17}). In
    what follows we show that, if the natural frequencies of the
    oscillators in disjoint clusters are sufficiently different,
    invariant, and hence stable, submanifolds cannot exist. To see
    this, assume that the phases of the disjoint clusters $\mc P_\ell$
    and $\mc P_z$ remain equal over time. Then, using Assumption (A2)
    and (A3), the dynamics
    \begin{align}\label{eq: difference dynamics remark}
      \dot \theta_\ell - \dot \theta_z =\, \omega_\ell - \omega_z 
      + \sum_{k=1}^m &\left[ \left(\sum_{r \in \mc P_k} a_{\ell
                       r}\right) \sin(\theta_k - \theta_\ell)\right. \notag\\ 
                     &\!\!\!-\left.\left(\sum_{r \in \mc P_k} a_{z
                       r}\right) \sin(\theta_k - \theta_z)\right]\!,
    \end{align}
    must be identically zero, where $\theta_i$ 
    denotes the phase of any oscillator in $\mc P_i$. Clearly, if the following inequality holds,
    \begin{align}\label{eq: condition onset} |\omega_\ell
      -\omega_z| > 2(m-2) \max_{k\ne \ell,z} \left\{\sum_{r \in \mc P_k} a_{\ell r} \,,\sum_{r \in \mc P_k} a_{z r}\! \right\},
    \end{align}
    Equation \eqref{eq: difference dynamics remark} cannot vanish and,
    consequently, the clusters $\mc P_\ell$ and $\mc P_z$ cannot
    evolve with the same phases when the network is cluster
    synchronized.\footnote{In  \eqref{eq: condition onset}, we have $(m-2)$ because
      for $k=z,\ell$, the sine terms in \eqref{eq: difference dynamics
        remark} vanish.} More generally, if condition \eqref{eq:
      condition onset} is satisfied for all pairs of clusters, then
    invariant, and hence stable, cluster synchronization submanifolds
    cannot exist. \oprocend
\end{remark}


We conclude with an example showing that the synchronization manifold
$\mc S_{\mc P}$ is, in general, not globally asymptotically stable due to
the existence of multiple invariant sets.
\begin{example}{\bf \emph{(Multiple invariant
      sets)}}\label{ex: toy example}
  Consider a Kuramoto network with $2N$ oscillators ($N \ge 2$) and with an
  adjacency matrix defined as follows\footnote{This analysis extends
    directly to arbitrary weights $a_{ij} = a$, $a \in\real_{>0}$.}
  (see Fig. \ref{fig: 8n2c} for the case $N = 5$):
  
\begin{figure}[t]
  \centering
    \hspace{0.3cm}\subfigure[]
    {
      \centering
        \includegraphics[width=0.4\columnwidth]{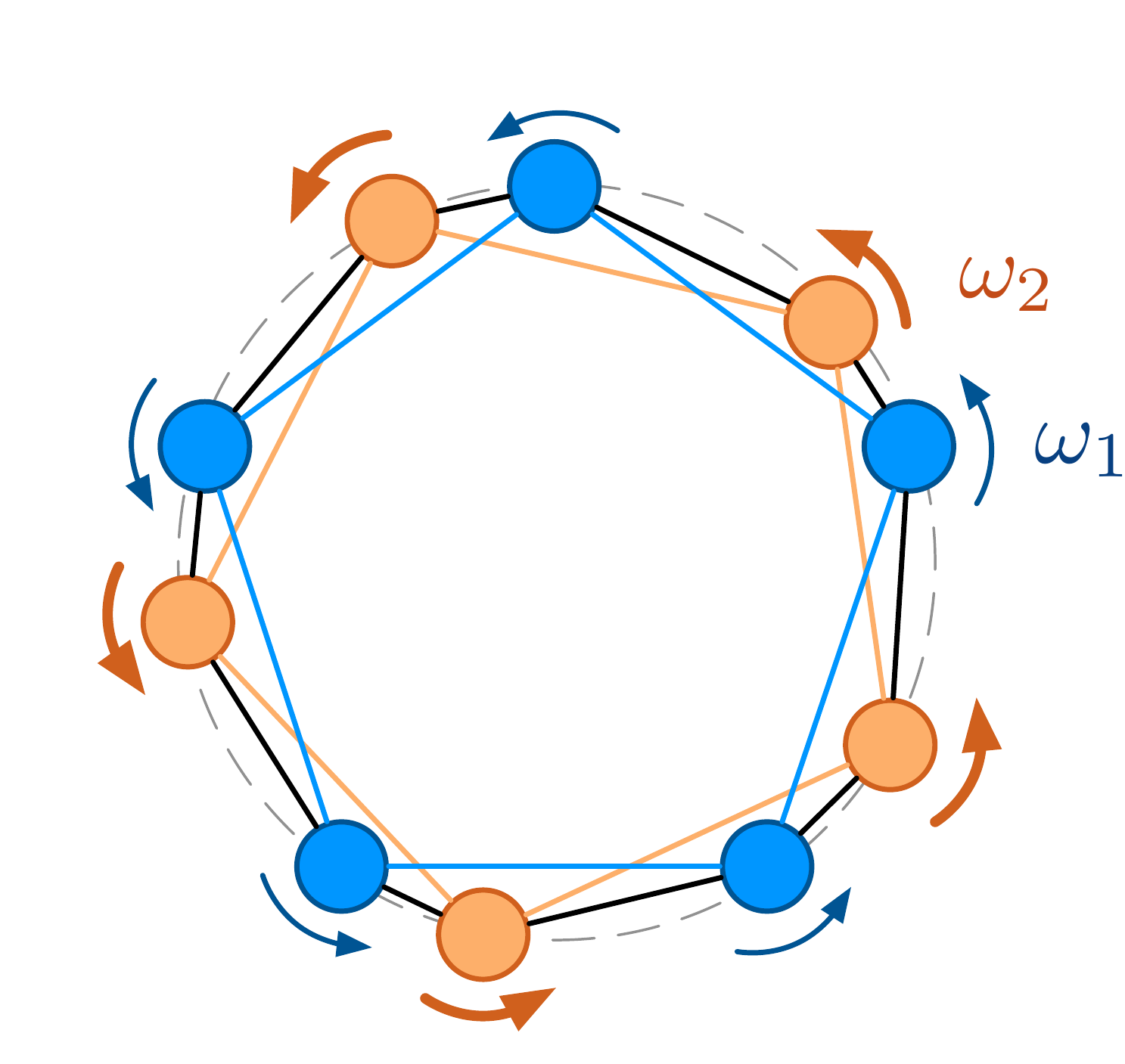}\label{fig: 8n2c}
}\hspace{0.1cm}
    \subfigure[]
    {\centering
    	\includegraphics[width=0.45\columnwidth]{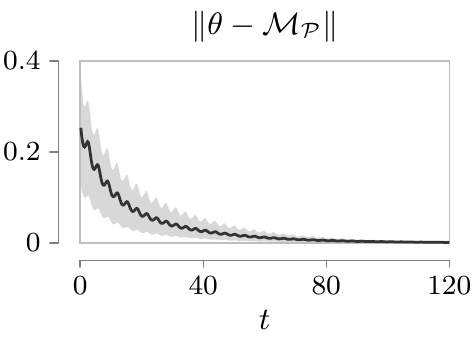}\label{fig: stable Mp}
    }
    \subfigure[]
    {\includegraphics[width=0.45\columnwidth]{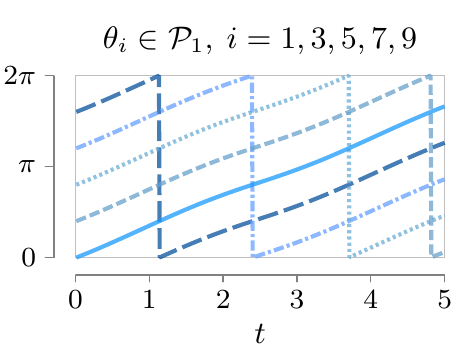}\label{fig: invariant Mp1}
    }
        \subfigure[]
    {\includegraphics[width=0.45\columnwidth]{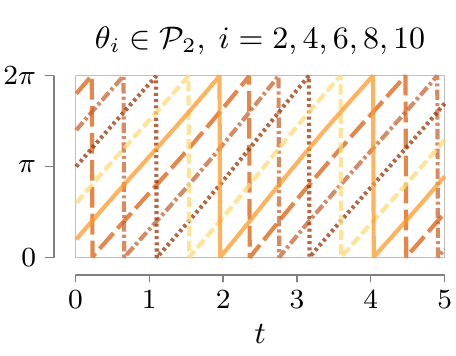}\label{fig: invariant Mp2}
    }
    \caption{Fig. \ref{fig: 8n2c} shows the network in Example \ref{ex: toy example} for the case $N = 5$. The nodes belonging to partition $\mc P_1$ are blue and have natural frequency $\omega_1 = 1$, while the nodes belonging to partition $\mc P_2$ are orange and have $\omega_2 = 3$. 
    Fig. \ref{fig: stable Mp}
    illustrates the stability of the set $\mc M_{\mc P}$ via numerical simulations. We performed $10^3$ iterations, each one with $\theta(0)$ chosen
      randomly within an angle of $\pm 0.01 ~[\text{rad}]$ from
      $\mc M_{\mc P}$. The thick line represents the mean value among all simulations of the $2$-norm distance between $\theta$ and
      $\mc M_{\mc P}$, while the faded area represents the smallest and largest value of the $2$-norm distance between $\theta$ and
      $\mc M_{\mc P}$.
      Fig. \ref{fig: invariant Mp1}-\ref{fig: invariant Mp2} illustrate the invariance of the set $\mc M_{\mc P}$ as the phases in the clusters $\mc P_1$ and $\mc P_2$ evolve respectively with the same~frequencies. 
}
\end{figure}
  \begin{align*}
    a_{ij} =
    \begin{cases}
      1, & \text{ if } |i-j| \le 2,\\
      0, & \text{ otherwise},
    \end{cases}
  \end{align*}
  with $i,j \in \until{2N}$ (and the convention
  $2N + \ell \triangleq \ell$, $-\ell \triangleq 2N + \ell - 1$, for
  $\ell \in \{1,2\}$). Let $\mc P = \{\mc P_1, \mc P_2\}$, with
  $\mc P_1 = \{1,3,\dots,2N-1\}$, $\mc P_2 = \{ 2,4,\dots,2N \}$, and
  define
  \begin{align*}
    \mc M_{\mc P} = \setdef{\theta \in \mathbb{T}^{2N}\! }{\! \theta_{i+2} =
    \theta_i + 2\pi/N, i = 1,\dots, 2N-2}.
  \end{align*}
  It can be verified that Assumption (A3) is satisfied, and that the
  set $\mc S_{\mc P}$ is
  invariant whenever the natural frequencies satisfy Assumption
  (A2). Yet, the set $\mc S_{\mc P}$ is not the only invariant set. In
  fact, $\mc M_{\mc P}$ is also invariant (we prove this by
  showing that $\dot \theta_i = \dot \theta_{i+2}$ when
  $\theta_i,\theta_{i+2} \in \mc M_{\mc P}$):
  \begin{align*}
    \dot\theta_{i} &= \omega_i +  \sin(\theta_{i-2} - \theta_i)
                     +  \sin(\theta_{i+2} - \theta_i)\\
                   &\;\;\;\;+ \sin(\theta_{i-1} - \theta_i)
                     +  \sin(\theta_{i+1} - \theta_i)\\
                   &= \omega_i +  \sin(\theta_i - \theta_{i+2} ) +
                      \sin(\theta_{i+4} - \theta_{i+2}) \\&\;\;\;\;+   \sin(\theta_{i+1} - \theta_{i+2})
                                                                     +
                                                                     \sin(\theta_{i+3}
                                                                     -
                                                                     \theta_{i+2})
                   = \dot \theta_{i+2},
  \end{align*}
  where we have used the fact that
  $\theta_{i+2} - \theta_{i} = 2\pi/N$, and $\omega_i =\omega_j$ for
  all $i,j$ in the same cluster. Further, it can be
  verified numerically that, depending on the number of oscillators
  $N$, the set $\mc M_{\mc P}$ is also locally stable (see
  Fig. \ref{fig: stable Mp}). We conclude that the cluster
  synchronization manifold $\mc S_{\mc P}$ is not, in general,
  globally asymptotically stable. In what follows we derive conditions
  guaranteeing local stability of $\mc S_{\mc P}$. \oprocend
\end{example}

\section{Conditions for the stability of the cluster synchronization
  manifold}\label{sec: section 3}
In this section we derive sufficient conditions for the local
exponential stability of the cluster synchronization manifold. Define
the phase difference $x_{ij} = \theta_j - \theta_i$, and notice that
\begin{align}\label{eq: differences dynamics}
  \dot x_{ij} =  \omega_j-\omega_i+\sum_{z=1}^n \left[a_{jz}\sin(x_{jz})-a_{iz}\sin(x_{iz})\right].
\end{align}
Given a partition $\mc P = \{\mc P_1, \dots ,\mc P_m\}$
of the set $\mc V$ in the graph $\mc G$, we define the following graphs
(see also Example \ref{ex: graphical example}):
\begin{enumerate}
\item the graph of the $k$-th cluster, with $k \in \until{m}$,
  $\mc G_{k} = (\mc P_k, \mc E_{k})$, where
  $\mc E_{k} = \setdef{(i,j)}{(i,j) \in \mc E,\;i,j \in \mc P_k}$;

\item a spanning tree $\mc T_{k} = (\mc P_k, \mc E_{\text{span},k})$
  of $\mc G_{k}$;\footnote{We assume that $\mc G$ and its subgraphs
    $\mc G_k$ are connected. This guarantees the existence of the
    (connected) spanning trees defined in (ii) and (iii). A graph is
    connected if there exists a path between any pair of nodes
    \cite{Godsil2001}.}

\item a spanning tree $\mc T = (\mc V, \mc E_{\mc T})$ of $\mc G$ with
  $\mc E_{\mc T} = \bigcup_{k=1}^m \mc E_{\text{span},k} \cup \mc
  E_\text{inter}$, where $\mc E_\text{inter}$ satisfies
  $| \mc E_\text{inter} | = m-1$.
\end{enumerate}
Further, we define the following vectors of phase differences:
\begin{enumerate}
\setcounter{enumi}{3}
\item $\subscr{x}{intra}^{(k)} = [x_{ij}]$, for all
  $(i,j) \in \mc E_{\text{span},k}$ with $i< j$,

\item
  $\subscr{x}{intra} =
  \left[\subscr{x}{intra}^{(1)\transpose},\dots,\subscr{x}{intra}^{(m)\transpose}\right]^{\transpose}$, and

\item $\subscr{x}{inter} = [x_{ij}]$, for all
  $(i,j) \in \mc E_\text{inter}$ with $i< j$.
\end{enumerate}

It should be noticed that the vectors $\subscr{x}{intra}^{(k)}$,
$\subscr{x}{intra}$ and $\subscr{x}{inter}$ contain, respectively,
$n_{\text{intra},k} = |\mc P_{k}|-1$, $\subscr{n}{intra} = n - m$ and
$\subscr{n}{inter} = m - 1$ entries. Notice that every phase
difference can be computed as a linear function of $\subscr{x}{intra}$
and $\subscr{x}{inter}$. To see this, let $i,j \in \mc V$, and let
$\mathrm{p}(i,j) = \{p_1, \dots, p_\ell\}$ be the unique path on $\mc T$
from $i$ to $j$. Define
$\mathrm{dif}\mathrm{f}(\mathrm{p}(i,j)) = \sum_{k =1}^{\ell-1} s_k$, where
$s_k = x_{p_{k} p_{k+1}}$ if $p_{k}< p_{k+1}$, and
$s_k = - x_{p_{k+1} p_{k}}$ otherwise. Then,
$x_{ij} = \mathrm{dif}\mathrm{f}(\mathrm{p}(i,j))$, and the vectors
$\subscr{x}{intra}$ and $\subscr{x}{inter}$ contain a smallest set of
phase differences that can be used to quantify synchronization among
all of the oscillators in the network. 

Let $B=[b_{k\ell}] \in \real^{|\mc V| \times |\mc E|}$ denote the oriented
incidence matrix of the graph $\mc G = (\mc V, \mc E)$, where $\ell$
corresponds to the edge $(i,j) \in \mc E$, $b_{k\ell} = 1$ if node $k$
is the sink of the edge $\ell$, $b_{k\ell} = -1$ if $k$ is the source
of $\ell$, and $b_{k\ell} = 0$ otherwise.\footnote{Node $i$ is the
  source (resp. sink) of $(i,j)$ if $i < j$ (resp. $i > j$).} Further,
let $B_k$ and $B_{\text{span},k}$ denote the incidence matrices of
$\mc G_{k}$ and $\mc T_{k}$, respectively. Notice that
  $B_{\text{span},k}$ is full rank because it is the incidence matrix
  of an acyclic graph (tree) \cite[Theorem
  8.3.1]{Godsil2001}. Let
$T_{\text{intra},k} = B_{k}^\transpose
(B_{\text{span},k}^{\transpose})^{\dagger}$ be the unique matrix that
maps the phase differences contained in $x_\text{intra}^{(k)}$ to all
intra-cluster phase differences in the $k$-th cluster. That is,
\begin{align}\label{eq: Tintra}
  x^{(k)} = T_{\text{intra},k} \subscr{x}{intra}^{(k)} ,
\end{align}
where
$x^{(k)}$ contains all phase differences in the cluster $\mc P_k$.

We conclude this part by rewriting the intra-cluster dynamics in a
form that will be useful to prove our results. In particular, from the
above discussion and for an intra-cluster phase difference $x_{ij}$ of $x_\text{intra}^{(k)}$, we
rewrite \eqref{eq: differences dynamics} as
\begin{align}\label{eq: Gij}
  \dot x_{ij} =& \underbrace{\sum_{z \in \mc P_k} \left[a_{jz} \sin(\mathrm{dif}\mathrm{f}(\mathrm{p}(j,z))) - a_{iz} \sin(\mathrm{dif}\mathrm{f}(\mathrm{p}(i,z)))\right]}_{F_{ij}^{(k)}(x_\text{intra}^{(k)})} \notag \\
               &+ \underbrace{\sum_{z \not\in \mc P_k} \left[ a_{jz} \sin(\mathrm{dif}\mathrm{f}(\mathrm{p}(j,z))) - a_{iz} \sin(\mathrm{dif}\mathrm{f}(\mathrm{p}(i,z)))\right]}_{G_{ij}^{(k)}(x_\text{intra},x_\text{inter})},
\end{align}
which leads to
\begin{align}\label{eq: intra dynamics k}
  \dot x_{\text{intra}}^{(k)} = F^{(k)}(x_\text{intra}^{(k)}) + G^{(k)}(x_\text{intra},x_\text{inter}),
\end{align}
where $F^{(k)}$ is the vector of $F^{(k)}_{ij}$ and $G^{(k)}$ is the
vector of $G^{(k)}_{ij}$, for all $(i,j) \in \mc E_{\text{span},k}$
with $i< j$. Finally, by concatenating the dynamics \eqref{eq: intra
  dynamics k} for all clusters, we obtain
\begin{align}\label{eq: intra dynamics}
  \dot x_{\text{intra}} = F(x_\text{intra}) + G(x_\text{intra},x_\text{inter}).
\end{align}

\begin{figure}[t]
  \centering 
  \subfigure[]
  {
   \includegraphics[width=0.405\columnwidth]{./Graphical_example}
  \label{fig: example x intra x inter 1}
  }
    \;\;
    \subfigure[]{
    \includegraphics[width=.47\columnwidth]{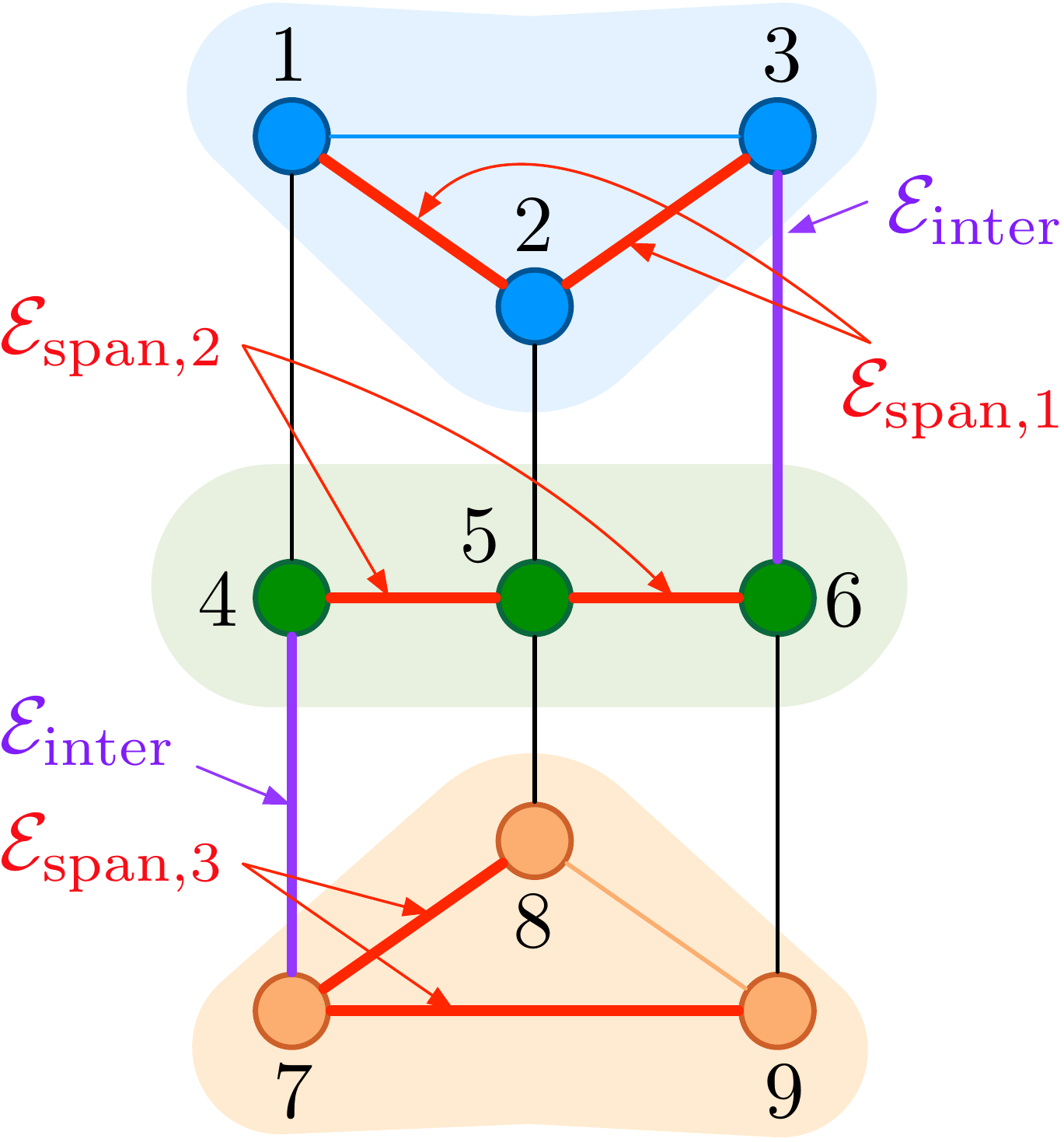}
  \label{fig: example x intra x inter 2}
  }
\caption{This figure illustrates the graph-theoretic definitions introduced in 
Section \ref{sec: section 3} for a network of $9$ Kuramoto oscillators. (see also Example \ref{ex: graphical example}).
Fig. \ref{fig: example x intra x inter 1} shows the partitions $\mc P = \{\mc P_{1}, \mc P_{2}, \mc P_{3}\}$. In Fig. \ref{fig: example x intra x inter 2}, $\mc E_{\text{span},1}$, $\mc E_{\text{span},2}$, and
  $\mc E_{\text{span},3}$ represent (in red) the edges of the intra-cluster spanning
  trees $\mc T_{1}$, $\mc T_{2}$ and $\mc T_{3}$, while the edges belonging to the set $\mc E_\text{inter}$ are depicted in purple.
}
\label{fig: example x intra x inter}
\end{figure}

\begin{example}{\bf \emph{(Illustration of the
      definitions)}}\label{ex: graphical example}
  We provide here an illustrative example of the definitions
  introduced in this section. Consider the network in Fig. \ref{fig:
    example x intra x inter 1} with partition
  $\mc P = \{\mc P_{1}, \mc P_{2}, \mc P_{3}\}$, where
  $\mc P_{1}=\{1,2,3\}$, $\mc P_{2}=\{4,5,6\}$ and
  $\mc P_{3}=\{7,8,9\}$. Fig. \ref{fig: example x intra x inter 2}
  illustrates the definitions of spanning trees, together with the
  edge sets $\mc E_{\text{span},k}$ $(k=1,2,3)$, and the inter-cluster
  edges in $\mc E_\text{inter} = \{(3,6),~(4,7) \}$.  The vectors of
  intra-cluster differences read as
  $x_{\text{intra}}^{(1)}=[x_{12}\;x_{23}]^\transpose$,
  $x_{\text{intra}}^{(2)}=[x_{45}\:x_{56}]^\transpose$, and
  $x_{\text{intra}}^{(3)}=[x_{78}\;x_{79}]^\transpose$, whereas the
  vector of inter-cluster differences reads as
  $x_\text{inter} = [x_{36}\; x_{47}]^\transpose$. 

  For the partition $\mc P_1$, order the edges as $\ell_1 =(1,2)$,
  $\ell_2 =(1,3)$, and $\ell_3 =(2,3)$. Then, a spanning tree is
  $\mc T_1 = (\mc P_1, \mc E_{\text{span},1})$, with
  $\mc E_{\text{span},1} = \{(1,2),(2,3)\}$, and the (oriented) incidence
  matrices $B_1$ of $\mc G_1$ and $B_{\text{span},1}$ of $\mc T_1$
  are
\begin{align*}
B_1 = 
\begin{bmatrix} 
-1 & -1 & 0 \\
1 & 0 & -1 \\
0 & 1 & 1
 \end{bmatrix},\;\;\;
 B_{\text{span},1} = 
\begin{bmatrix} 
-1 & 0  \\
1 & -1  \\
0 & 1 
 \end{bmatrix}.
 \end{align*}
 Finally, the matrix
 $T_{\text{intra},1} = B_{1}^\transpose
 (B_{\text{span},1}^{\transpose})^{\dagger}$ satisfies
 \begin{align*}
   T_{\text{intra},1} = 
   \begin{bmatrix} 
     1 & 1 & 0\\
     0 & 1 & 1
   \end{bmatrix}
             ^\transpose
             .
 \end{align*}
\oprocend
\end{example}

\subsection{Local asymptotic stability of $\mc S_{\mc P}$ via
  perturbation theory}\label{sec: perturbation}

In what follows we will make use of perturbation theory of dynamical systems to provide our first stability condition. We first introduce the following instrumental result.

\begin{lemma}{\bf \emph{(Properties of intra-cluster dynamics)}}\label{lemma: intra cluster dynamics}
The intra-cluster dynamics \eqref{eq: intra dynamics} satisfies the following properties:
      \begin{enumerate}
      
        \item the Jacobian matrix $J_\text{intra}$ of $F(x_\text{intra})$
    computed at the origin is Hurwitz stable and can be written as
    \begin{align}\label{eq: Jintra}
      J_\text{intra} = \left. \frac{\partial
      F(x_\text{intra})}{\partial \subscr{x}{intra}}\right|_{x_\text{intra} = 0} =
      \blkdiag \left( J_1, \dots, J_m\right), 
    \end{align}
    where, for $k \in \until{m}$, $T_{\text{intra},k}$ is as in
    \eqref{eq: Tintra} and
    \begin{align}\label{eq: J_k}
      J_k = -B_{\text{span},k}^\transpose B_k\,
        \mathrm{diag}(\{a_{ij} \}_{(i,j)\in\mc E_{k}})
        T_{\text{intra},k}.
    \end{align}
    Thus, the origin is an exponentially stable equilibrium of the
    system $\dot x_{\text{intra}} = F(x_\text{intra})$;

  \item There exist constants $\gamma^{(k\ell)} \in \real_{>0}$ such
    that
    \begin{align}\label{eq: bound on Gij}
      \| G^{(k)}({x_{\text{intra}}},x_\text{inter}) \| \le \sum_{\ell = 1}^m \gamma^{(k\ell)} \|  {x_{\text{intra}}^{(\ell)}} \|,
    \end{align}
    for all $k, \ell \in \until{m}$. Specifically, 
    \begin{align}\label{eq: gamma k ell}
      \gamma^{(k\ell)} =2 \max_{r} \; n_{\text{intra},r}\, \tilde\gamma^{(k\ell)},
    \end{align}
    where, for any $i \in \mc P_k$,
    \begin{align}\label{eq: gamma hat}
     \tilde \gamma^{(k\ell)} = \begin{cases} 
	\displaystyle \sum_{j\in \mc P_{\ell}}
        a_{ij}, &  \text{ if } \ell\ne k, \\ 
	\displaystyle \sum_{\substack{\ell=1
            \\ 
            \ell\ne k}}^{m}\sum_{j\in \mc P_{\ell}} a_{ij}, & \text{ otherwise.} 
	\end{cases}
    \end{align}

  \end{enumerate}
\end{lemma}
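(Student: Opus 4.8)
The plan is to treat the two claims separately: first the block structure and Hurwitz stability of $J_\text{intra}$, then the Lipschitz-type bound on $G^{(k)}$.

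For part (i), I would first observe that, by construction, each $F^{(k)}$ in \eqref{eq: intra dynamics k} depends only on $\subscr{x}{intra}^{(k)}$ and not on the differences of the other clusters, so that $\partial F^{(k)}/\partial \subscr{x}{intra}^{(\ell)} = 0$ for $\ell\neq k$; this immediately yields the block-diagonal form \eqref{eq: Jintra}. To identify the blocks, I would rewrite the intra-cluster flow in incidence-matrix form. Letting $\theta^{(k)}$ be the vector of phases in cluster $k$ and using symmetry (A1) together with the orientation convention of $B$, the intra-cluster part of the dynamics reads $F^{(k)}(\subscr{x}{intra}^{(k)}) = -B_{\text{span},k}^\transpose B_k\,\mathrm{diag}(\{a_{ij}\})\sin(x^{(k)})$, where $\subscr{x}{intra}^{(k)} = B_{\text{span},k}^\transpose\theta^{(k)}$ and $x^{(k)} = T_{\text{intra},k}\subscr{x}{intra}^{(k)}$ by \eqref{eq: Tintra}. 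Linearizing the sine at the origin, where its derivative equals $1$, then produces exactly \eqref{eq: J_k}.

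To prove that $J_k$ is Hurwitz, I would substitute $T_{\text{intra},k} = B_k^\transpose(B_{\text{span},k}^\transpose)^\dagger$ into \eqref{eq: J_k} to obtain $J_k = -B_{\text{span},k}^\transpose L_k (B_{\text{span},k}^\transpose)^\dagger$, where $L_k = B_k\,\mathrm{diag}(\{a_{ij}\})B_k^\transpose$ is the symmetric positive semidefinite weighted Laplacian of $\mc G_k$. Writing $M = B_{\text{span},k}^\transpose$, which has full row rank with $\Ker M = \mathrm{span}(\1)$ because $\mc T_k$ is a spanning tree, and using $L_k\1 = 0$ together with the fact that $L_k$ maps $\1^\perp$ into itself, the restriction of $M$ to $\1^\perp$ is a bijection onto $\real^{n_{\text{intra},k}}$ whose inverse coincides with $M^\dagger$. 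Hence $J_k = -M L_k M^\dagger$ is a genuine similarity transformation of $-L_k$ restricted to $\1^\perp$, so its spectrum consists precisely of the negatives of the nonzero—and, by connectedness of $\mc G_k$, strictly positive—eigenvalues of $L_k$. Therefore each $J_k$, and thus the block-diagonal $J_\text{intra}$, is Hurwitz, and exponential stability of the origin of $\dot{x}_{\text{intra}} = F(x_\text{intra})$ follows from Lyapunov's indirect method.

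For part (ii), the crucial point is that $G^{(k)}$ vanishes on the synchronization manifold. I would decompose each external phase difference as $\theta_z - \theta_j = \Delta_{k\ell} + \psi_z^{(\ell)} - \psi_j^{(k)}$ for $z\in\mc P_\ell$, where $\psi_z^{(\ell)} = \theta_z - \theta_{r_\ell}$ is the deviation of $z$ from a cluster reference node $r_\ell$—a linear function of $\subscr{x}{intra}^{(\ell)}$ vanishing when $\subscr{x}{intra}^{(\ell)}=0$—and $\Delta_{k\ell}$ is common to all $z\in\mc P_\ell$ and to the $i$- and $j$-terms. Grouping the sum defining $G_{ij}^{(k)}$ by clusters $\mc P_\ell$, Assumption (A3) gives $\sum_{z\in\mc P_\ell} a_{jz} = \sum_{z\in\mc P_\ell} a_{iz} = \tilde\gamma^{(k\ell)}$, so the common baseline $\sin(\Delta_{k\ell})$ cancels between the two subsums; this is what removes any dependence on $\subscr{x}{inter}$ and on the clusters hidden inside $\Delta_{k\ell}$. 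I would bound the residual terms via $|\sin(c+h)-\sin(c)|\le|h|$ with $|h|\le |\psi_z^{(\ell)}| + |\psi_i^{(k)}|$, estimate each $|\psi|$ by its spanning-tree path sum and hence by $\max_r n_{\text{intra},r}\,\|\subscr{x}{intra}^{(\cdot)}\|$, and collect terms to recover the constants \eqref{eq: gamma k ell}--\eqref{eq: gamma hat}.

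The main obstacle is the bookkeeping in part (ii): one must verify that the quantities buried in $\Delta_{k\ell}$, which through the tree path between cluster references may involve intra-cluster differences of clusters other than $k$ and $\ell$, genuinely cancel, so that the final bound depends only on $\subscr{x}{intra}^{(k)}$ and $\subscr{x}{intra}^{(\ell)}$; this cancellation is exactly where (A3) is indispensable, and it is also what makes the bound independent of $\subscr{x}{inter}$, as required for the subsequent perturbation argument. A secondary technical point is tracking the combinatorial factors when passing from the scalar estimates on the entries $G_{ij}^{(k)}$ to the vector norm $\|G^{(k)}\|$, which is absorbed into the factor $\max_r n_{\text{intra},r}$ appearing in \eqref{eq: gamma k ell}.
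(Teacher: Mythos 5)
Your proposal is correct and takes essentially the same route as the paper's proof: in part (i) you show $J_k = -B_{\text{span},k}^\transpose \mc L_{\mc G_k} (B_{\text{span},k}^\transpose)^\dagger$ is a similarity transformation of the negative Laplacian restricted to the orthogonal complement of $\1$ (the paper realizes the same fact via the explicit change of basis $W_k = [B_{\text{span},k}\;\, \1]^\transpose$), and in part (ii) your reference-node decomposition with the common term $\Delta_{k\ell}$ cancelled by Assumption (A3), followed by the bound $|\sin(c+h)-\sin(c)|\le |h|$ and tree-path estimates, is exactly the paper's decomposition through the boundary nodes $k^*,\ell^*$ with residuals $\delta_{jz}$. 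The constants also work out to \eqref{eq: gamma k ell}--\eqref{eq: gamma hat}, so there is no gap.
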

\smallskip

As formalized in the next theorem, Lemma \ref{lemma: intra cluster
  dynamics}, together with results on stability of perturbed systems
\cite[Chapter 9]{HKK:02}, implies that the origin of \eqref{eq: intra
  dynamics}, and thus the cluster synchronization manifold
$\mc S_{\mc P}$, is exponentially stable for some choices of the
network weights. Recall that an $M$-matrix is a real nonsingular
matrix $A = [a_{ij}]$ such that $a_{ij} \le 0$ for all $i\ne j$ and
all leading principal minors are positive \cite[Chapter
2.5]{RAH-CRJ:94}.

\begin{theorem}{\bf \emph{(Sufficient condition on network weights for
      the stability of $\mc S_{\mc P}$)}}\label{thm: stability of Sp}
  Let $\mc S_{\mc P}$ be the cluster synchronization manifold
  associated with a partition $\mc P = \{\mc P_1, \dots, \mc P_m\}$ of
  the network $\mc G$ of Kuramoto oscillators. 
  Let $\gamma^{(k\ell)}$ be the constants defined
  in \eqref{eq: gamma k ell}. Define the matrix
  $S \in \real^{m \times m}$ as
\begin{align}\label{eq: matrix S}
  S = [s_{k\ell}] = \begin{cases}
    \lambda_{\text{max}}^{-1}(X_{k})-\gamma^{(kk)}& \text{ if } k=\ell,\\
    -\gamma^{(k\ell)} & \text{ if } k\neq \ell,
  \end{cases}
\end{align}  
where  $X_{k}\succ0$ is such that $J_{k}^{\transpose}X_{k}+X_{k}J_{k}=-I$, with $J_k$ as in \eqref{eq: J_k}.
If $S$ is an $M$-matrix, then the cluster synchronization manifold
$\mc S_{\mc P}$ is locally exponentially stable.
\end{theorem}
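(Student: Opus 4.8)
The plan is to treat the full intra-cluster dynamics \eqref{eq: intra dynamics} as a perturbation of the nominal system $\dot x_\text{intra} = F(x_\text{intra})$, whose origin is exponentially stable by Lemma \ref{lemma: intra cluster dynamics}(i), with the coupling term $G$ playing the role of a \emph{vanishing} interconnection bounded block-by-block as in \eqref{eq: bound on Gij}. A crucial preliminary observation is that the inter-cluster differences $x_\text{inter}$ parametrize motion \emph{along} $\mc S_{\mc P}$ and enter \eqref{eq: intra dynamics} only through $G$; since the bound \eqref{eq: bound on Gij} is independent of $x_\text{inter}$, I may regard $x_\text{inter}(t)$ as an exogenous, time-varying signal and establish exponential stability of $x_\text{intra} = 0$ \emph{uniformly} in it. Local exponential stability of $x_\text{intra}=0$ then translates into local exponential stability of $\mc S_{\mc P}$, because $x_\text{intra}$ encodes precisely the distance of $\theta$ from the manifold.

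Concretely, for each cluster I would use the quadratic Lyapunov function $V_k = x_\text{intra}^{(k)\transpose} X_k\, x_\text{intra}^{(k)}$, where $X_k \succ 0$ solves $J_k^\transpose X_k + X_k J_k = -I$ for the Hurwitz block $J_k$ of \eqref{eq: Jintra}. Differentiating $V_k$ along \eqref{eq: intra dynamics k} and linearizing $F^{(k)}$ about the origin gives $\dot V_k = -\|x_\text{intra}^{(k)}\|^2 + 2\, x_\text{intra}^{(k)\transpose} X_k G^{(k)} + O(\|x_\text{intra}\|^3)$. The nominal term is bounded using $\|x_\text{intra}^{(k)}\|^2 \ge \lambda_{\text{max}}^{-1}(X_k)\, V_k$---this is exactly where the diagonal entries of $S$ originate---while the interconnection term is controlled through \eqref{eq: bound on Gij}, which furnishes the coefficients $\gamma^{(k\ell)}$. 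Passing to the weighted norms $w_k = \sqrt{V_k}$ renders the cross terms linear in the $w_\ell$, and stacking the resulting scalar inequalities yields a vector differential inequality of comparison type, $\dot w \preceq -S\, w$, valid in a neighborhood of the origin small enough that the neglected higher-order terms do not overturn the estimate.

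It then remains to exploit the hypothesis that $S$ is an $M$-matrix. By the standard characterization of nonsingular $M$-matrices, $-S$ is Metzler and Hurwitz, so the comparison system $\dot z = -S z$ has an exponentially stable origin; since $-S$ is Metzler the dynamics is cooperative, and the comparison principle gives $0 \le w(t) \le z(t)$ componentwise whenever $w(0) = z(0)$, whence $w(t) \to 0$, and therefore $x_\text{intra}(t)\to 0$, exponentially fast. I expect the main obstacle to lie not in this last, essentially linear, step, but in the bookkeeping that produces \emph{exactly} the matrix $S$: one must carry the block-diagonal structure of $J_\text{intra}$ through the Lyapunov computation, convert the $\ell^2$-bounds on $G^{(k)}$ of Lemma \ref{lemma: intra cluster dynamics}(ii) into the correct comparison coefficients, and verify that the nominal decay rate is indeed $\lambda_{\text{max}}^{-1}(X_k)$, all while keeping the higher-order remainder dominated on a fixed neighborhood and the estimates uniform in the free variable $x_\text{inter}$.
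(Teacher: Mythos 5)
Your architecture coincides with the paper's proof up to the last step: the same decomposition of \eqref{eq: intra dynamics} into isolated cluster dynamics plus the interconnection $G$, the same per-cluster quadratic functions built from the Lyapunov matrices $X_k$, the same use of both statements of Lemma \ref{lemma: intra cluster dynamics}, and the same (correct) observations that the estimates are uniform in $x_\text{inter}$ and that stability of $x_\text{intra}=0$ is what is meant by stability of $\mc S_{\mc P}$. The paper, too, works inside the interconnected-systems framework of \cite[Chapter 9.5]{HKK:02}.

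The gap is in your concluding step: the componentwise inequality $\dot w \preceq -S w$, with $w_k = \sqrt{V_k}$, does not hold with the matrix $S$ of \eqref{eq: matrix S}. Differentiating gives $\dot w_k = \dot V_k/(2w_k)$, and your estimate
\begin{align*}
\dot V_k \le\; & -\|x_\text{intra}^{(k)}\|^2 \\
& + 2\lambda_{\text{max}}(X_k)\,\|x_\text{intra}^{(k)}\| \sum_{\ell=1}^m \gamma^{(k\ell)}\|x_\text{intra}^{(\ell)}\| + O(\|x_\text{intra}\|^3)
\end{align*}
is expressed in the norms $\|x_\text{intra}^{(\ell)}\|$, not in the $w_\ell$; converting between the two through $\sqrt{\lambda_{\text{min}}(X_\ell)}\,\|x_\text{intra}^{(\ell)}\| \le w_\ell \le \sqrt{\lambda_{\text{max}}(X_\ell)}\,\|x_\text{intra}^{(\ell)}\|$ is unavoidable and pollutes the coefficients. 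What actually comes out is $\dot w \preceq -\tilde S w$ (up to higher-order terms), where the diagonal entries are $\tfrac{1}{2}\lambda_{\text{max}}^{-1}(X_k) - \tfrac{\lambda_{\text{max}}(X_k)}{\lambda_{\text{min}}(X_k)}\gamma^{(kk)}$ and each off-diagonal entry $-\gamma^{(k\ell)}$ is inflated by the factor $\lambda_{\text{max}}(X_k)/\sqrt{\lambda_{\text{min}}(X_k)\lambda_{\text{min}}(X_\ell)} \ge 1$. Requiring $\tilde S$ to be an M-matrix is strictly stronger than requiring $S$ to be one, so your comparison argument (which is otherwise sound: $-\tilde S$ is Metzler and Hurwitz, and the cooperative comparison principle applies) proves a strictly more conservative statement than the theorem. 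The missing idea is to exploit the M-matrix hypothesis \emph{before} any norm conversion: by \cite[Lemma 9.7]{HKK:02}, $S$ is a nonsingular M-matrix if and only if there exists a positive diagonal $D = \mathrm{diag}(d_1,\dots,d_m)$ with $DS + S^\transpose D \succ 0$. Using precisely these weights in the scalar composite function $V = \sum_k d_k V_k$, and keeping every bound as a quadratic form in the vector $\phi = \left(\|x_\text{intra}^{(1)}\|,\dots,\|x_\text{intra}^{(m)}\|\right)^\transpose$, yields $\dot V \le -\tfrac{1}{2}\,\phi^\transpose (DS + S^\transpose D)\,\phi + O(\|x_\text{intra}\|^3)$, which is negative definite near the origin and delivers the theorem with exactly the matrix $S$ in \eqref{eq: matrix S}. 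This is how the paper concludes, via \cite[Lemma 9.7 and Theorem 9.2]{HKK:02}, with the choice $Q_k = I$ justified as in Remark \ref{remark: tightest perturb condition}.
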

\medskip

\begin{remark}{\bf \emph{(Family of bounds)}}\label{remark: tightest
    perturb condition}
  In \eqref{eq: matrix S}, the matrices $X_k$ can be selected as the
  solutions to the Lyapunov equations
  $J_{k}^{\transpose}X_{k}+X_{k}J_{k}=-Q_k$, for arbitrary positive
  definite matrices $Q_k$. Yet, selecting $Q_k = I$ for all $k$ yields
  a tighter stability bound. This follows because (i) if $S$ is an
  $M$-matrix, then $S+\Delta$ remains an $M$-matrix whenever $\Delta$
  is a nonnegative diagonal matrix \cite[Theorem 2.5.3]{RAH-CRJ:94},
  and (ii) the ratio
 $\lambda_{\text{min}}(Q_{k})/ \lambda_{\text{max}}(X_{k})$ is
  maximal whenever $Q_k = I$ \cite[Exercise 9.1]{HKK:02}.  \oprocend
\end{remark}

Theorem \ref{thm: stability of Sp} describes a sufficient condition on
the network weights for the stability of the cluster synchronization
manifold. Loosely speaking, the cluster synchronization manifold is
exponentially stable when the intra-cluster coupling (measured by
$\lambda_{\text{max}}^{-1}(X_{k})-\gamma^{(kk)}$) is sufficiently
stronger than the perturbation induced by the inter-cluster
connections (measured by $\gamma^{(k\ell)}$). In
  particular, the term $\lambda_{\text{max}}^{-1}(X_{k})$ is
  proportional to the intra-cluster weights and it is implicitly
  related to the network topology. In fact, the matrix $X_{k}$ is the
  solution of a Lyapunov's equation containing $J_k$, whose spectrum
  coincides with the stable eigenvalues of the negative Laplacian
  matrix of the $k$-th cluster. We refer the interested reader to the
  proof of Lemma \ref{lemma: intra cluster dynamics}. 
  Finally, we remark that a result akin to Theorem \ref{thm: stability
    of Sp} has been derived in \cite{JQ-QM-HG-YS-YK:17}, although for
  interconnected systems whose coupling functions are required to
  satisfy certain assumptions that fail to hold in the
  Kuramoto~model.

\begin{figure}[t]
\centering
   \subfigure[]
    {
\includegraphics[width=0.41\columnwidth]{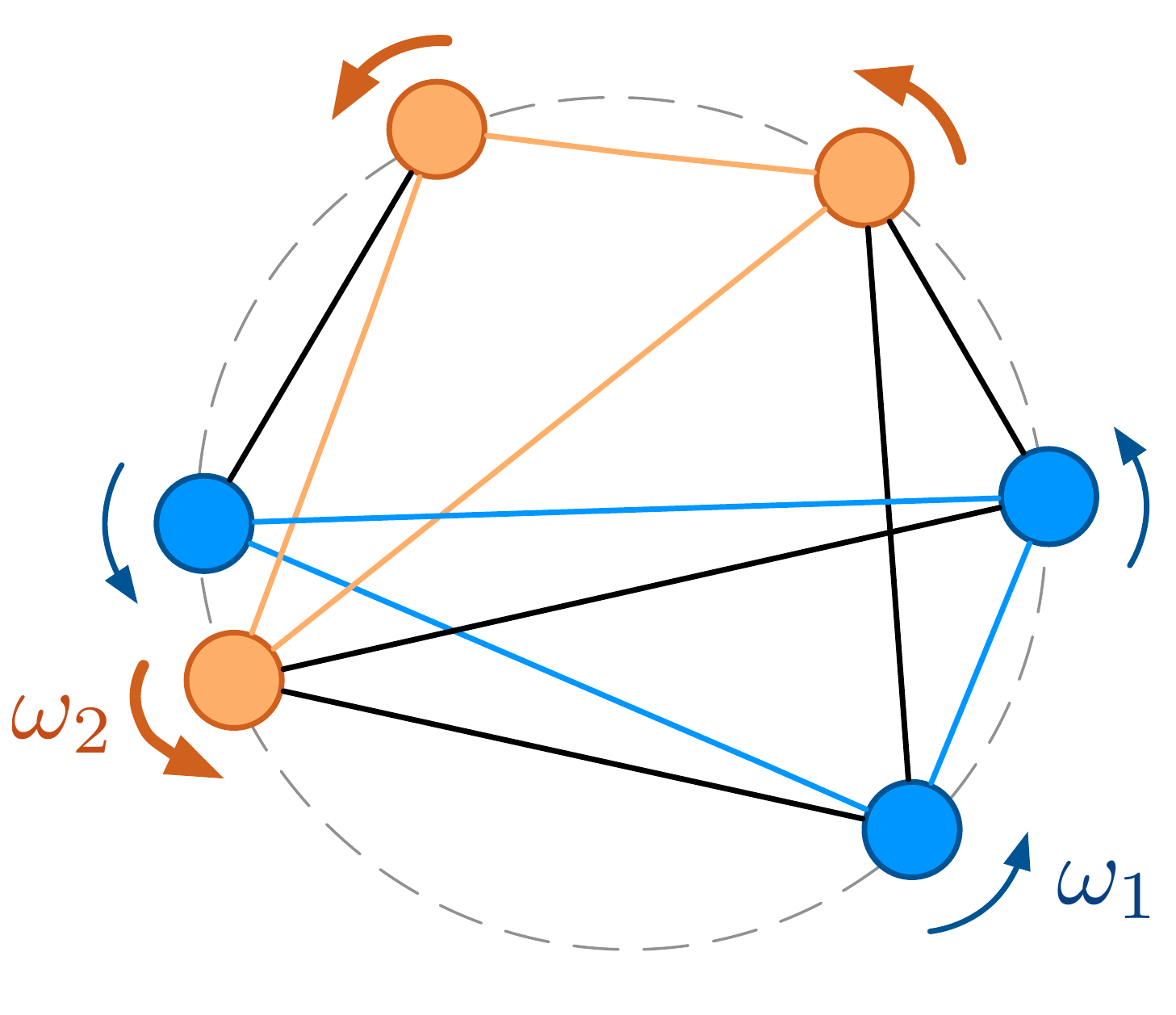}
        \label{fig: example perturbation network}
        } \vspace{0.2cm}
 \subfigure[]
    {
\includegraphics[width=0.52\columnwidth]{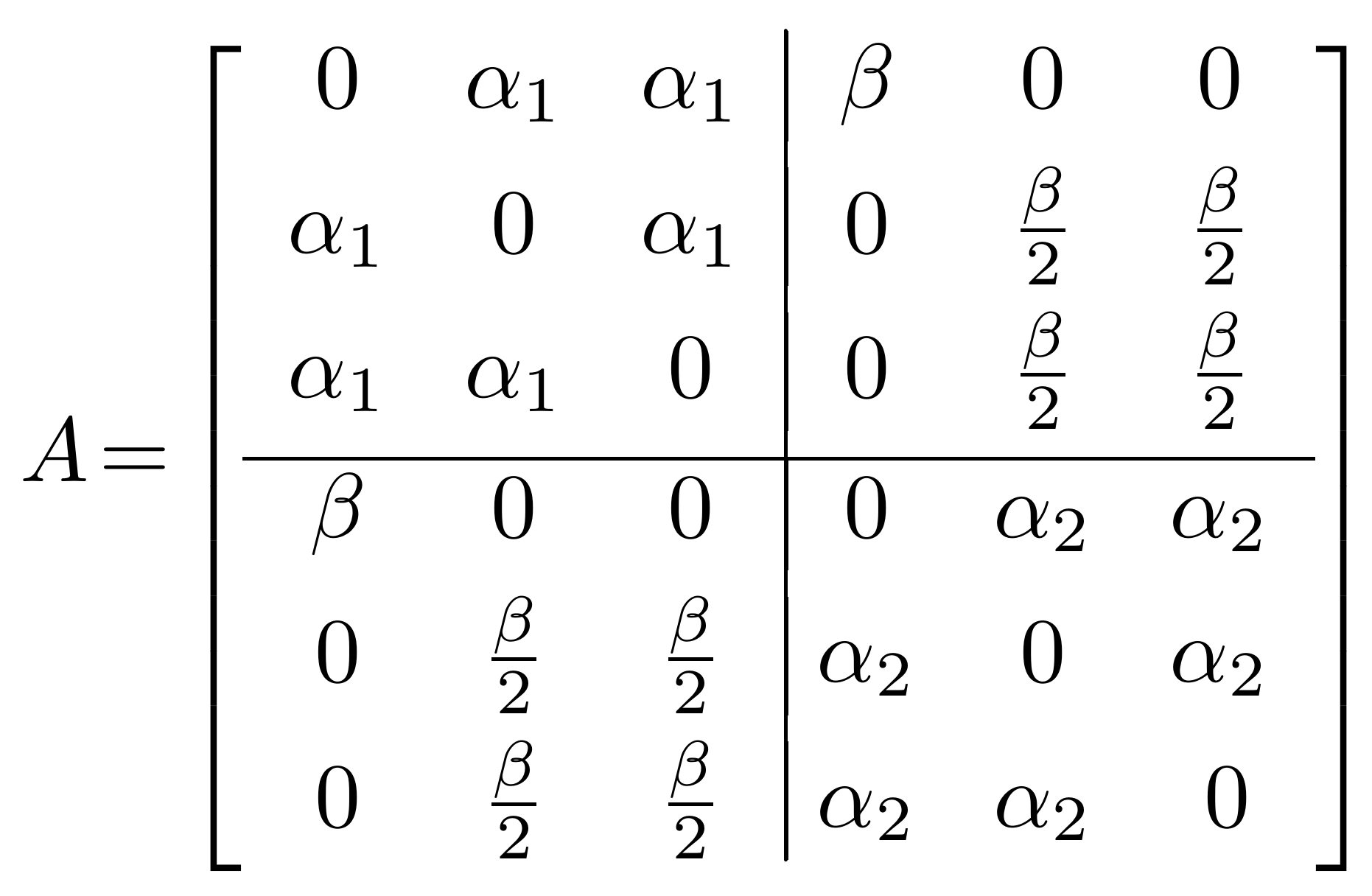}
        \label{fig: example perturbation network adjacency}
        }
          \vspace{-0.5cm}
  \includegraphics[width=.5\columnwidth]{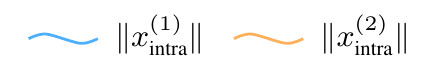}\\
\subfigure[]
    {
        \includegraphics[width=.455\columnwidth]{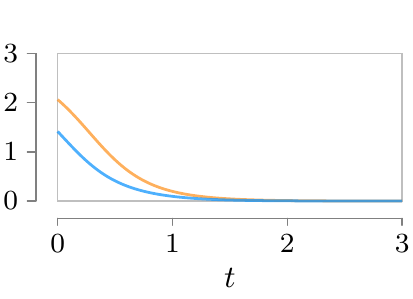}\label{fig: convergence alpha stable}
    }
    \subfigure[]
    {
    	\includegraphics[width=.455\columnwidth]{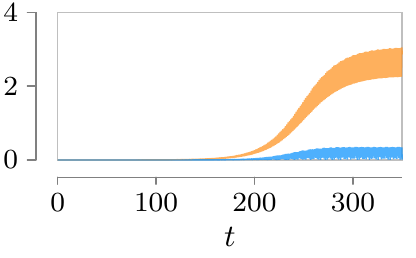}\label{fig: convergence alpha unstable}
    }

    \caption{Fig. \ref{fig: example perturbation network} illustrates the network of 6 Kuramoto oscillators in Example \ref{ex:
        perturbation thm}. We identify the clusters $\mc P_1$ and
      $\mc P_2$ in blue and orange,
      respectively.
      Fig. \ref{fig: example perturbation network adjacency} contains the adjacency matrix of the network in Fig. \ref{fig: example perturbation network}. The parameters
      $\alpha_1, \alpha_2$, and $\beta$ represent the
      intra-cluster and inter-cluster weights, respectively.
        Fig. \ref{fig: convergence alpha stable} illustrates the stability of the cluster synchronization manifold $\mc S_{\mc P}$ for
      $\alpha_{1}=\alpha_{2}=1$ and
      $\beta=0.1$, as predicted by Theorem \ref{thm: stability of Sp}.
      Fig. \ref{fig: convergence alpha unstable} shows that $\mc S_{\mc P}$ is unstable when 
       $\alpha_{1}=\beta=1$ and $\alpha_{2}=0.001$.
      }
  \label{fig: example perturbation}
\end{figure}

\begin{example}{\bf \emph{(Tradeoff between intra- and
      inter-cluster weights)}}\label{ex: perturbation thm}
  Consider the network in Fig.~\ref{fig: example perturbation network}
  with partition $\mc P = \{\mc P_1, \mc P_2 \}$, where
  $\mc P_1 = \{1,2,3\}$ and $\mc P_2 = \{4,5,6\}$, natural frequencies
  $\omega_1=1$ and
  $\omega_{2}=6$ for the oscillators in
  $\mc P_1$ and $\mc P_2$, and adjacency matrix as in Fig. \ref{fig:
    example perturbation network adjacency}.  The parameters
  $\alpha_{1},\alpha_{2}\in\real_{>0}$ and $\beta \in\real_{>0}$
  denote the strength of the intra- and inter-cluster coupling,
  respectively. Let $\alpha_{1}=\alpha_{2}$, and construct the matrix
  $S$ as in Theorem \ref{thm: stability of Sp}:
  \begin{align*}
    S = 
    \begin{bmatrix} 
      \lambda_\text{max}^{-1}(X_{1})-\gamma_{11} & -\gamma_{12} \\ 
      -\gamma_{12} & \lambda_\text{max}^{-1}(X_2)-\gamma_{22}
    \end{bmatrix},
  \end{align*}
  where $X_{k}\succ0$ is such that
  $J_{k}^{\transpose}X_{k}+X_{k}J_{k}=-I$,
 $\lambda_{\text{max}}^{-1}(X_1) = \lambda_{\text{max}}^{-1}(X_2) =
  2\,\alpha_{1}$ and, from \eqref{eq: gamma k ell},
  $\gamma_{ij} = 4\, \beta $ for all $i$, $j$. By inspecting all
  leading principal minors, $S$ is an $M$-matrix if
  $\alpha_1 /\beta > 4$, and the cluster synchronization manifold
  $\mc S_{\mc P}$ is exponentially stable (see Fig. \ref{fig:
    convergence alpha stable}). We remark that, when
  $\alpha_1 \neq \alpha_2$, the synchronization manifold
  $\mc S_{\mc P}$ can become unstable, as we verify numerically in
  Fig. \ref{fig: convergence alpha unstable}.  \oprocend
\end{example}

\begin{figure}[t]
  \centering 
  \includegraphics[width=1\columnwidth]{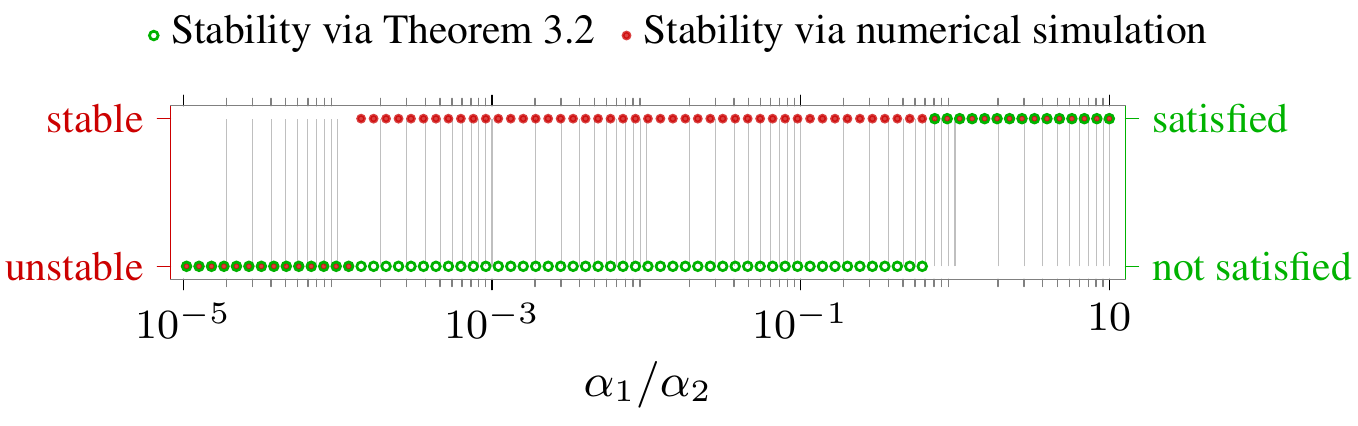}
  \caption{This Figure shows that the condition in Theorem \ref{thm:
      stability of Sp} leads to conservative stability bounds.  For
    the network in Example \ref{ex: perturbation thm}, we let
    $\beta =0.1$ and plot, as a function of the ratio
    $\alpha_1/\alpha_2$, the stable configurations predicted by
    Theorem \ref{thm: stability of Sp} (green) and those found
    numerically.   For each value of $\alpha_1/\alpha_2$,
      we assess numerical stability by making use of the Floquet
      stability theory \cite[Chapter 5]{WJR:96} and by resorting to
      statement (i) in Lemma \ref{lemma: inter cluster nom traj}. This
      is possible because the partition in Example \ref{ex:
        perturbation thm} has only two clusters.}
  \label{fig: example comparison stability}
\end{figure}

The stability condition in Theorem \ref{thm: stability of Sp} depends
only on the network weights, and typically leads to conservative
bounds (see also Fig.~\ref{fig: example comparison stability}). To
derive refined stability conditions, we next characterize how the
natural frequencies of the oscillators affect stability of the cluster
synchronization manifold.

\subsection{Local asymptotic stability of $\mc S_{\mc P}$ when the
  oscillators' natural frequencies in disjoint clusters are
  sufficiently~different}\label{sec: frequency}
Natural frequencies play a fundamental role for full and cluster
synchronization of Kuramoto oscillators. However, while heterogeneity
of the natural frequencies typically impedes full synchronization
\cite{Doerfler2014}, we will show that cluster synchronization is in
fact facilitated when the oscillators in different clusters have
sufficiently different natural frequencies. We start with an
asymptotic result that is valid for arbitrary networks and partitions,
and then improve our results for the case of partitions containing
only two clusters.

\begin{theorem}{\bf \emph{(Stability of $\mc S_{\mc P}$ for large
      natural frequency differences)}}\label{thm: stability of Sp with
    limit omega}
  Let $\mc S_{\mc P}$ be the cluster synchronization manifold
  associated with a partition $\mc P = \{\mc P_1, \dots, \mc P_m\}$ of
  the network $\mc G$ of Kuramoto oscillators. Let
  $\omega_i \in \real_{>0}$ be the natural frequency of the
  oscillators in the cluster $\mc P_i$, with $i \in \until{m}$. In the
  limit $|\omega_i - \omega_j | \rightarrow \infty$, for all
  $i,j \in \until{m}$, $i\neq j$, the cluster synchronization manifold
  $\mc S_{\mc P}$ is locally exponentially stable.
\end{theorem}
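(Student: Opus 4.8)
The plan is to exploit the singular-perturbation / averaging structure that emerges when the natural-frequency differences grow without bound. The key observation is that the inter-cluster phase differences $x_\text{inter}$ are forced by the natural-frequency mismatches: from the difference dynamics in \eqref{eq: differences dynamics}, any phase difference $x_{ij}$ between oscillators in \emph{distinct} clusters $\mc P_z$ and $\mc P_\ell$ obeys $\dot x_{ij} = (\omega_\ell - \omega_z) + (\text{bounded coupling terms})$, where the coupling terms are uniformly bounded by a constant depending only on the network weights (each sine is bounded by $1$). Hence, as $|\omega_\ell - \omega_z| \to \infty$, the inter-cluster differences rotate arbitrarily fast compared to the intra-cluster dynamics, which is governed by $F(x_\text{intra})$ and a perturbation $G(x_\text{intra},x_\text{inter})$ as in \eqref{eq: intra dynamics}.

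\textbf{First} I would decompose the full linearized dynamics around $\mc S_{\mc P}$ into the intra-cluster block and the inter-cluster block. By Lemma \ref{lemma: intra cluster dynamics}(i), the unforced intra-cluster Jacobian $J_\text{intra}$ is Hurwitz, so the intra-cluster subsystem is exponentially stable on its own; the only threat to stability is the coupling term $G$. \textbf{Second}, I would treat the fast inter-cluster rotation as a high-frequency forcing on the slow intra-cluster system. The crucial point is that $G^{(k)}(x_\text{intra},x_\text{inter})$ enters through sines of path-differences $\mathrm{dif}\mathrm{f}(\mathrm{p}(\cdot,\cdot))$ that include inter-cluster edges, and these sinusoidal terms oscillate at frequencies proportional to $|\omega_i - \omega_j|$. \textbf{Third}, I would invoke an averaging argument: as the forcing frequency tends to infinity, the time-averaged effect of the fast-oscillating coupling terms on the slow intra-cluster variables vanishes (the average of $\sin$ over a full period is zero), so to leading order the intra-cluster dynamics reduce to the autonomous exponentially-stable system $\dot x_\text{intra} = F(x_\text{intra})$. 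Standard averaging theory for periodic (or quasi-periodic) systems \cite[Chapter 10]{HKK:02} then transfers exponential stability of the averaged system to local exponential stability of the original time-varying system, for sufficiently large frequency separation.

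\textbf{The main obstacle} I anticipate is handling the quasi-periodic, rather than purely periodic, nature of the forcing: with more than two clusters the inter-cluster differences rotate at several incommensurate frequencies $\omega_\ell - \omega_z$, so a single-period averaging theorem does not apply directly, and one must argue that the averaged contribution still vanishes in the joint limit where \emph{all} pairwise differences go to infinity. I would address this by taking the limit carefully: fixing a reference cluster and scaling all frequency gaps simultaneously, so that each fast phase variable has a well-defined (large) rotation rate, and then appealing to the vanishing of the averaged perturbation along every independent fast direction. A secondary technical point is to verify that the bound on $G$ from \eqref{eq: bound on Gij} guarantees the perturbation remains uniformly bounded (not merely small in average), which is what licenses the averaging estimate and rules out resonant growth. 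Once the averaged system is identified as $\dot x_\text{intra} = F(x_\text{intra})$ with Hurwitz Jacobian, the conclusion follows from the persistence of exponential stability under high-frequency perturbations.
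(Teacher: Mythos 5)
Your proposal is correct and follows essentially the same route as the paper: both arguments rest on the Hurwitz intra-cluster Jacobian (statement (i) of Lemma \ref{lemma: intra cluster dynamics}) together with the observation that, as the frequency gaps diverge, the inter-cluster differences rotate arbitrarily fast, so the zero-average oscillating coupling washes out and leaves the exponentially stable dynamics $\dot x_\text{intra} = F(x_\text{intra})$. The only difference is implementation: rather than citing a quasi-periodic averaging theorem, the paper carries out the averaging by hand, linearizing along the nominal trajectory (Lemma \ref{lemma: linearized intra cluster}), proving $x_{ij}(t)/(\omega_j-\omega_i) \to t$ via a comparison-principle argument (Lemma \ref{lemma: limit of omega}), and then using a time-varying Lyapunov function $V = x_\text{intra}^\transpose \Gamma(t)\, x_\text{intra}$ whose matrix $\Gamma(t)$ is chosen to absorb the oscillating term $\sum_{(\ell,z)\in\mc C}\cos(x^{(\ell z)})\bigl(J_\text{inter}^{(\ell z)\transpose}\Gamma + \Gamma J_\text{inter}^{(\ell z)}\bigr)$, which becomes constant in the limit precisely because each $\int_0^t \cos(x^{(\ell z)})\dif\tau$ vanishes.
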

\medskip

Theorem \ref{thm: stability of Sp with limit omega} shows that
heterogeneity of the natural frequencies of the oscillators in
different clusters facilitates cluster synchronization, independently
of the network weights. We remark that a similar behavior was also
identified in \cite{CF-DSB-AC-FP:16}, although with a different method
and definition of synchronization.

We next improve upon Theorem
\ref{thm: stability of Sp with limit omega} by analyzing the case
where the natural frequencies are finite and the partition $\mc P$
contains only two clusters. To this aim, let
$\mc P = \{ \mc P_1, \mc P_2\}$ and assume, without loss of
generality, that $\omega_{2}\ge \omega_{1}$, where $\omega_i$ is the
natural frequency of the oscillators in $\mc P_i$. Define
\begin{align*}
  \bar \omega &= \omega_2 - \omega_1, \text{ and } 
  \bar a = \sum_{k \in \mc P_2} a_{ik} + \sum_{k \in \mc P_1} a_{jk} ,
\end{align*}
for any $i \in \mc P_1$ and $j \in \mc P_2$. The next result
characterizes the inter-cluster phase difference when the network
evolves on the cluster synchronization manifold.

\begin{lemma}{\bf \emph{(Nominal inter-cluster
      difference)}}\label{lemma: inter cluster nom traj}
  Let $\mc S_{\mc P}$ be the cluster synchronization manifold
  associated with a partition $\mc P = \{\mc P_1, \mc P_2\}$ of the
  network $\mc G$ of Kuramoto oscillators. Let
  $\theta (0) \in \mc S_{\mc P}$ (equivalently,
  $\subscr{x}{intra} (0) = 0$). Then, if $\subscr{x}{intra} = 0$ at all
  times and $\bar\omega > \bar a$,
  \begin{align}\label{eq: h(t)}
       x_{\text{inter}}(t) &= \begin{cases} 
h(t), &\text{ if } t \neq t_0+kT,~ k\in\mathbb{Z},\\
                \pi,& \text{ if } t=t_0+kT,~ k\in\mathbb{Z}.
\end{cases}\notag \\
      &\triangleq \subscr{x}{nom} (t),
  \end{align}
  where $$
  h(t) = 2 \tan^{-1}\!\left(\!\frac{\bar a+ \sqrt{\bar
            \omega^2- \bar a^2}\, \tan\!\left( \frac{\sqrt{ \bar \omega^2-
                \bar a^2}}{2}( t+ \tau)\right)}{\bar \omega}\!\right),$$
                $t_0 = -\tau + \pi/\sqrt{ \bar \omega^2-
                \bar a^2}$, $T = 2\pi/\sqrt{\bar \omega^2-\bar a^2}$, and 
                 $\tau \in \real$ is a constant that depends only on
  $\theta(0)$.  Moreover,
  \begin{enumerate} 
  \item $x_\text{nom}$ is $T$-periodic with zero time average, and

    \item the following inequality holds:
      \begin{align}\label{eq: bound on cos(h) nom}
        \left| \int_0^t \cos( x_\text{nom}(\tau)) \,\dif \tau \right|
        \le \frac{1}{\bar a}  \log\left(\frac{\bar \omega + \bar
        a}{\bar  \omega - \bar a} \right).
      \end{align}
    \end{enumerate}
\end{lemma}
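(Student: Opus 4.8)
The plan is to reduce the dynamics on $\mc S_{\mc P}$ to a single scalar ODE, integrate it in closed form, and then read off the two claimed properties. First I would exploit that when $\subscr{x}{intra}=0$ every oscillator in $\mc P_1$ shares a common phase $\theta^{(1)}$ and every oscillator in $\mc P_2$ a common phase $\theta^{(2)}$; since $m=2$ we have $\subscr{n}{inter}=m-1=1$, so the single inter-cluster difference (with appropriate orientation, $\subscr{x}{inter}=\theta^{(2)}-\theta^{(1)}$) is the only nontrivial variable. Substituting into \eqref{eq: kuramoto} the intra-cluster sine terms vanish, and using (A2) to cancel the frequencies within each cluster together with the definitions of $\bar\omega$ and $\bar a$, one obtains $\dot{\subscr{x}{inter}}=\bar\omega-\bar a\sin(\subscr{x}{inter})$. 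Assumption (A3) is what guarantees that this scalar equation is the same regardless of which inter-cluster edge is chosen as the representative.

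Next I would integrate this autonomous scalar ODE. Since $\bar\omega>\bar a$ we have $\dot{\subscr{x}{inter}}\ge\bar\omega-\bar a>0$, so there is no equilibrium and the solution winds monotonically around $\mathbb{S}^1$. Separating variables and applying the Weierstrass substitution $u=\tan(\subscr{x}{inter}/2)$ turns $\int d\subscr{x}{inter}/(\bar\omega-\bar a\sin \subscr{x}{inter})$ into the integral of a rational function with irreducible quadratic denominator $\bar\omega u^2-2\bar a u+\bar\omega$; completing the square and using $\int dv/(v^2+c^2)=c^{-1}\arctan(v/c)$ yields, after inverting, precisely the closed form $h(t)$, with the free integration constant $\tau$ fixed by $\subscr{x}{inter}(0)$ and hence by $\theta(0)$. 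The delicate point, and the step I expect to be the main obstacle, is the bookkeeping at the instants where the inner tangent diverges: there $\arctan$ saturates at $\pm\pi/2$, so $h\to\pi\equiv-\pi$, and one must verify that assigning the value $\pi$ at $t=t_0+kT$ yields the continuous (on the torus), monotone, $T$-periodic solution of the ODE. This also identifies $t_0$ as the first singular time and $T=2\pi/\sqrt{\bar\omega^2-\bar a^2}$ as the winding period.

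Statement (i) then follows readily: the inner tangent has period $\pi$ in its argument and $\tfrac{1}{2}\sqrt{\bar\omega^2-\bar a^2}\,T=\pi$, so $h(t+T)=h(t)$ and $\subscr{x}{nom}$ is $T$-periodic. For the averaging it is cleanest to change the integration variable back to the phase via $dt=d\subscr{x}{inter}/(\bar\omega-\bar a\sin \subscr{x}{inter})$, so that period integrals become integrals of $\subscr{x}{inter}$ over a full winding $(-\pi,\pi]$; the substitution $u=\bar\omega-\bar a\sin \subscr{x}{inter}$ closes on itself over one period and delivers the vanishing period-average of the relevant quantity (namely $\cos\subscr{x}{nom}$, the coefficient entering the transverse variational dynamics used later in the Floquet analysis).

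Finally, for statement (ii) I would set $I(t)=\int_0^t\cos(\subscr{x}{nom}(\tau))\,d\tau$. The substitution $u=\bar\omega-\bar a\sin \subscr{x}{inter}$ gives $\int_0^T\cos(\subscr{x}{nom})\,dt=0$, so $I$ is $T$-periodic; moreover, over the sub-interval on which $\subscr{x}{nom}$ runs from $-\pi/2$ to $\pi/2$ (where $\cos\ge0$) the same computation gives the increment $\int_{-\pi/2}^{\pi/2}\cos \subscr{x}{inter}\,d\subscr{x}{inter}/(\bar\omega-\bar a\sin \subscr{x}{inter})=\bar a^{-1}\log\big((\bar\omega+\bar a)/(\bar\omega-\bar a)\big)$, and on the complementary sub-interval $I$ decreases by the same amount. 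Hence the peak-to-peak amplitude of the periodic function $I$ equals $\bar a^{-1}\log\big((\bar\omega+\bar a)/(\bar\omega-\bar a)\big)$; since $I(0)=0$ lies in the range of $I$, it follows that $|I(t)|\le \bar a^{-1}\log\big((\bar\omega+\bar a)/(\bar\omega-\bar a)\big)$ for all $t$, which is exactly \eqref{eq: bound on cos(h) nom}.
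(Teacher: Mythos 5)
Your proposal is correct and follows essentially the same route as the paper's proof: reduce the on-manifold dynamics to the separable scalar ODE $\dot x_{\text{inter}} = \bar\omega - \bar a \sin(x_{\text{inter}})$, integrate it in closed form (tangent half-angle substitution) to obtain $h(t)$, obtain $T$-periodicity from the $\pi$-periodicity of the tangent, and convert time integrals of $\cos(x_{\text{nom}})$ into phase integrals via the substitution $z = x_{\text{nom}}$. The only cosmetic difference is in statement (ii): the paper bounds the resulting closed form $\frac{1}{\bar a}\log\bigl(\frac{\bar\omega-\bar a\sin(x(0))}{\bar\omega-\bar a\sin(x_{\text{nom}}(t))}\bigr)$ directly by its extreme values, whereas you argue via the peak-to-peak amplitude of the periodic antiderivative together with $I(0)=0$; both rest on the same change-of-variables identity.
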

\smallskip
 

\begin{remark}{\bf \emph{(Constant versus time-varying inter-cluster
      difference)}}\label{remark: condition omega}
  The values of $\bar \omega$ and $\bar a$ determine the behavior of
  the inter-cluster phase difference. In particular, if
  $\bar \omega < \bar a$, then the inter-cluster difference evolves as
  in \eqref{eq: h(t)}.\footnote{In fact, $\sqrt{\bar \omega^2-\bar a^2}$ becomes a complex number and, by recalling that $\tan( \mathsf{i} \alpha) =\mathsf{i}\tanh(\alpha)$, where $\alpha\in\real$, in \eqref{eq: h(t)} we have $x_\text{inter}(t) = 2 \tan^{-1}((\bar a-\sqrt{\bar a^2-\bar \omega^2}\,\tanh(\sqrt{\bar a^2-\bar \omega^2}(t+\tau)/2))/\bar \omega)$.}
If $\bar \omega = \bar a$, \eqref{eq: differences dynamics} reduces to
  $\dot x_{\text{inter}} = \bar a - \bar a \sin(
  x_{\text{inter}})$, which can be integrated:
  \begin{align}
   \bar at  \notag &=  \int_{x_\text{inter}(0)}^{x_\text{inter}(t)}  (1-  \sin(s))^{-1}\mathrm{d} s \\
\bar  at &= \frac{2\sin(x_\text{inter}(t)/2)}{\cos(x_\text{inter}(t)/2)-\sin(x_\text{inter}(t)/2)}
      +  \tau. \label{eq: temp intergal}
  \end{align}
  By substitution, it can be verified that
  \begin{align}
    x_\text{inter}(t) &= 2 \cos^{-1}\!\left(\!\frac{\bar a t - \tau
                        +2}{\sqrt{2(\bar a t  - \tau +1)^{2}+2 }}\!\right)\!, \notag
  \end{align}
  satisfies equation \eqref{eq: temp intergal}.
In both cases ($\bar \omega \le \bar a$), $x_\text{inter}$ converges
to the constant value
$2\tan^{-1}((\bar a-\sqrt{\bar a^2-\bar \omega^2})/\bar \omega)$ as
$t$ increases to infinity. In other words, if
$\bar \omega \le \bar a$, then the phases of the oscillators in the
two clusters evolve with the same frequency, and the oscillators are
phase locked (see Fig.~\ref{fig: stable 1} and \cite[Remark
1]{Doerfler2014}). Instead, if $\bar \omega > \bar a$, the clusters
evolve with different frequencies, and the inter-cluster phase
difference follows a limit cycle (see Fig.~\ref{fig: stable 2} and
\cite[Chapter 2]{HKK:02}). \oprocend
\end{remark}

\begin{figure}[t]
  \centering 
  
  \vspace{0.05cm}
  
  \includegraphics[width=.6\columnwidth]{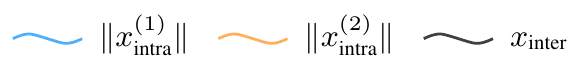}
  
    \subfigure[]{
    \includegraphics[width=.46\columnwidth]{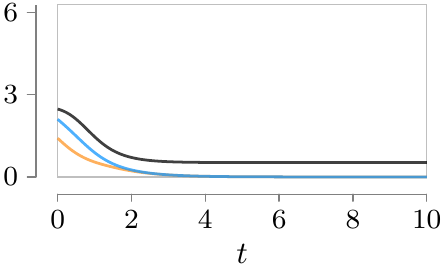}
  \label{fig: stable 1}} 
  \subfigure[]{
    \includegraphics[width=.46\columnwidth]{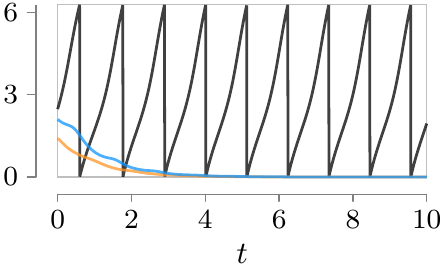}
  \label{fig: stable 2}} 
\caption{
    For the network in Example \ref{ex: perturbation
    thm} with $\alpha_1=\alpha_2 = \beta=1$, $\bar{a}=2$ and $\bar\omega=1$, Fig. \ref{fig: stable 1} shows that the clusters are synchronized (as $\Vert x_{\text{intra}}^{(1)}\Vert$ and $\Vert x_{\text{intra}}^{(2)}\Vert$ converge to zero), yet all oscillators remain phase locked ($x_\text{inter}$ converges to a constant).  Instead, Fig. \ref{fig: stable 2} shows that the inter-cluster difference follows a limit cycle when $\alpha_1=\alpha_2 = \beta=1$, $\bar{a}=2$ and $\bar\omega=6$.
}
\label{fig: clusters or phase locking}
\end{figure}

In the remainder of this section we assume that
$\bar \omega > \bar a$, so that the clusters evolve with different
frequencies (see Remark \ref{remark: condition omega}).
Leveraging Lemma \ref{lemma: inter cluster nom traj}, we next present
a refined condition for the stability of the cluster synchronization
manifold.

\begin{theorem}{\bf \emph{(Sufficient condition on network weights and
      natural frequencies for the stability of
      $\mc S_{\mc P}$)}}\label{thm: stability based on Lyapunov}
  Let $\mc S_{\mc P}$ be the cluster synchronization manifold
  associated with a partition $\mc P = \{\mc P_1, \mc P_2\}$ of the
  network $\mc G$ of Kuramoto oscillators. Let
  $\omega_i \in \real_{>0}$ be the natural frequency of the
  oscillators in the cluster $\mc P_i$, with $i \in \{1,2\}$. Let
  $\subscr{J}{intra}$ be as in Lemma \ref{lemma: intra cluster
    dynamics}, and
  $J_\text{inter}=\partial G(x_\text{intra}, x_\text{inter})
  / \partial x_\text{intra}$ along the trajectory $x_\text{intra} = 0$
  and $x_\text{inter} = x_\text{nom}$. The cluster synchronization
  manifold $\mc S_{\mc P}$ is locally exponentially stable if the
  following inequality holds:
  \begin{align}\label{eq: bound frequency}
    \left(\frac{\bar \omega + \bar a}{\bar \omega -  \bar a}
    \right)^{\frac{2}{\bar a} \|J_{\text{inter}}\|}  < 1+\frac{1}{2
    \lambda_{\text{max}}(X) \|J_{\text{intra}}\|},
  \end{align}
  where $X\succ 0$ is the solution of
  $J_\text{intra}^\transpose X + X J_\text{intra} = -I$.
\end{theorem}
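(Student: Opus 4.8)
The plan is to analyze the dynamics \eqref{eq: intra dynamics} of $x_\text{intra}$ near $\mc S_{\mc P}$ (i.e.\ near $x_\text{intra}=0$) while $x_\text{inter}=x_\text{nom}$, and to show that the associated time-varying linear system is exponentially stable exactly under \eqref{eq: bound frequency}. First I would linearize \eqref{eq: intra dynamics} about $x_\text{intra}=0$ along the nominal trajectory, obtaining $\dot x_\text{intra} = (J_\text{intra}+J_\text{inter}(t))\,x_\text{intra} + \text{h.o.t.}$ Two structural facts are central. (a) The coupling of an inter-cluster perturbation into this equation vanishes: differentiating each $G_{ij}^{(k)}$ in \eqref{eq: Gij} with respect to $x_\text{inter}$ at $x_\text{intra}=0$ gives a factor $\sum_{z\in\mc P_\ell}(a_{jz}-a_{iz})$, which is zero by Assumption (A3); hence the transverse dynamics is governed by $x_\text{intra}$ alone at first order. (b) With two clusters, every cross-cluster path $\mathrm{p}(i,z)$ contains a single inter-cluster edge, so at $x_\text{intra}=0$ one has $\mathrm{diff}(\mathrm{p}(i,z))=\pm x_\text{inter}$; differentiating the sines in \eqref{eq: Gij} then yields $J_\text{inter}(t)=\cos(x_\text{nom}(t))\,M$ for a constant matrix $M$ with $\|M\|=\|J_\text{inter}\|:=\sup_t\|J_\text{inter}(t)\|$ (the supremum being attained because $x_\text{nom}$ reaches $\pi$).

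The key step removes the explicit time dependence through a Lyapunov transformation built from the \emph{signed} integral of $\cos(x_\text{nom})$. Let $c(t)=\int_0^t\cos(x_\text{nom}(\tau))\dif\tau$ and set $z=e^{-c(t)M}x_\text{intra}$. Since $\dot c(t)=\cos(x_\text{nom}(t))$ and $M$ commutes with $e^{c(t)M}$, the perturbation cancels exactly and the linear part becomes $\dot z=\tilde J(t)\,z$ with $\tilde J(t)=e^{-c(t)M}J_\text{intra}\,e^{c(t)M}$. By statement (ii) of Lemma \ref{lemma: inter cluster nom traj}, inequality \eqref{eq: bound on cos(h) nom}, we have $|c(t)|\le c_0:=\tfrac{1}{\bar a}\log\tfrac{\bar\omega+\bar a}{\bar\omega-\bar a}$ uniformly in $t$, so $e^{\pm c(t)M}$ are uniformly bounded with uniformly bounded inverses and $z\mapsto x_\text{intra}$ is a genuine Lyapunov transformation; exponential stability of the $z$-system is therefore equivalent to local exponential stability of $\mc S_{\mc P}$. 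This is exactly where the bounded signed average is indispensable: replacing $c(t)$ by $\int_0^t|\cos(x_\text{nom})|\dif\tau$ would give a linearly growing bound and destroy the estimate.

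It then remains to certify $\dot z=\tilde J(t)z$ using $V(z)=z^\transpose X z$, where $X\succ0$ solves $J_\text{intra}^\transpose X+X J_\text{intra}=-I$ (which exists because $J_\text{intra}$ is Hurwitz by Lemma \ref{lemma: intra cluster dynamics}). Writing $\tilde J(t)=J_\text{intra}+\Delta(t)$ and using $\|e^{\pm c(t)M}\|\le e^{c_0\|M\|}$ together with $\|e^{\pm c(t)M}-I\|\le e^{c_0\|M\|}-1$, I would bound $\|\Delta(t)\|=\|e^{-c(t)M}J_\text{intra}e^{c(t)M}-J_\text{intra}\|\le\|J_\text{intra}\|\,(e^{2c_0\|M\|}-1)$. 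Consequently $\dot V=-\|z\|^2+z^\transpose(\Delta^\transpose X+X\Delta)z\le-\big(1-2\lambda_{\text{max}}(X)\|\Delta\|\big)\|z\|^2$, which is negative definite whenever $2\lambda_{\text{max}}(X)\|J_\text{intra}\|(e^{2c_0\|M\|}-1)<1$. Substituting $c_0$ and $\|M\|=\|J_\text{inter}\|$ converts $e^{2c_0\|M\|}$ into $\big(\tfrac{\bar\omega+\bar a}{\bar\omega-\bar a}\big)^{\frac{2}{\bar a}\|J_\text{inter}\|}$, and this last inequality is precisely \eqref{eq: bound frequency}; exponential decay of $V$ gives $z\to0$, hence $x_\text{intra}\to0$, exponentially.

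The main obstacle is that $J_\text{inter}(t)$ is a genuinely time-varying perturbation whose norm neither decays nor is absolutely integrable, so the perturbation machinery behind Theorem \ref{thm: stability of Sp} and any naive Gronwall estimate fail. The argument hinges on two special features of the two-cluster case --- that $J_\text{inter}(t)$ equals the scalar $\cos(x_\text{nom}(t))$ times a fixed matrix $M$ (so its primitive is $c(t)M$, which commutes with $e^{c(t)M}$), and that $c(t)$ is uniformly bounded by \eqref{eq: bound on cos(h) nom} --- which together make the exponential Lyapunov transformation both admissible and sharp enough to recover the exact threshold \eqref{eq: bound frequency}.
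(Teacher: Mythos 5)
Your proposal is correct and takes essentially the same approach as the paper: your time-varying change of coordinates $z = e^{-c(t)M}x_\text{intra}$ followed by the constant Lyapunov matrix $X$ is precisely the paper's time-varying Lyapunov function $V(x_\text{intra},t) = x_\text{intra}^\transpose \Gamma(t)\, x_\text{intra}$ with $\Gamma(t) = e^{-c(t)M^\transpose} X\, e^{-c(t)M}$, obtained there by choosing $\dot\Gamma$ to cancel the $\cos(x_\text{nom})$ term. Both arguments rest on the same two ingredients --- the vanishing of $\partial G/\partial x_\text{inter}$ via Assumption (A3) and the uniform bound \eqref{eq: bound on cos(h) nom} on $\int_0^t \cos(x_\text{nom}(\tau))\dif\tau$ from Lemma \ref{lemma: inter cluster nom traj} --- and your perturbation estimate $\|\Delta(t)\|\le \|J_\text{intra}\|\bigl(e^{2c_0\|J_\text{inter}\|}-1\bigr)$ reproduces the paper's bound on its cross-term matrix, yielding the identical threshold \eqref{eq: bound frequency}.
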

\medskip

Theorem \ref{thm: stability based on Lyapunov} provides a quantitative
condition on the network weights and the natural frequencies of the
oscillators to ensure stability of the cluster synchronization
manifold. It can be shown that (i) when the inter-cluster weights
decrease to zero ($\bar a \rightarrow 0$) and $\bar \omega$ remains
bounded, then $\|\subscr{J}{inter}\|/\bar a$ remains bounded, the
left-hand side of \eqref{eq: bound frequency} converges to $1$, and
the inequality is automatically satisfied, and (ii) when $\bar \omega$
grows ($\bar \omega \rightarrow \infty$) and the inter-cluster weights
remain bounded, the left-hand side of \eqref{eq: bound frequency}
converges to $1$ and the inequality is automatically satisfied. The
role of the intra-cluster connections on the stability of
$\mc S_{\mc P}$ cannot be evaluated directly from \eqref{eq: bound
  frequency} because of the dependency of the right-hand side on
$\lambda_\text{max} (X)$. The following result, however, suggests that
the synchronization manifold may remain exponentially stable when the
intra-cluster weights are homogeneous, independently of the
inter-cluster weights and the natural frequencies.

\begin{theorem}{\bf \emph{(Stability of $\mc S_{\mc P}$ with
      homogeneous clusters)}}\label{corollary: stability with
    homogeneous clusters and large natural frequencies}
  Let $\mc S_{\mc P}$ be the cluster synchronization manifold
  associated with a partition $\mc P = \{\mc P_1, \mc P_2\}$ of the
  network $\mc G$ of Kuramoto oscillators. Let
  $\omega_i \in \real_{>0}$ be the natural frequency of the
  oscillators in the cluster $\mc P_i$, with $i \in \{1,2\}$. If
  $\subscr{J}{intra} = \alpha I$, for some constant
  $\alpha \in \real_{<0}$, then the cluster synchronization manifold
  $\mc S_{\mc P}$ is locally exponentially stable.
\end{theorem}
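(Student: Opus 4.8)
The plan is to show that, under the homogeneity assumption $J_\text{intra} = \alpha I$, the linear time-varying (LTV) system governing the transverse dynamics admits an explicitly computable state transition matrix whose norm decays exponentially, irrespective of the inter-cluster weights and of the natural frequencies. First I would invoke the setup already established in the proof of Theorem \ref{thm: stability based on Lyapunov}: the local stability of $\mc S_{\mc P}$, equivalently of the equilibrium $\subscr{x}{intra}=0$, is governed by the transverse LTV dynamics
\[
\dot{\delta x}_\text{intra} = \left[J_\text{intra} + J_\text{inter}(t)\right]\delta x_\text{intra},
\]
where $J_\text{inter}(t)=\partial G/\partial \subscr{x}{intra}$ is evaluated along the nominal trajectory $\subscr{x}{intra}=0$, $\subscr{x}{inter}=\subscr{x}{nom}(t)$. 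The reduction to this $\subscr{x}{intra}$-subsystem is exact to first order because the coupling $\partial G/\partial \subscr{x}{inter}$ evaluated on the manifold is proportional to $\sum_z (a_{jz}-a_{iz})$, which vanishes by Assumption (A3).

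The crucial structural observation, and the step I expect to be the main obstacle, is the factorization of $J_\text{inter}(t)$. Because the partition has only two clusters, along the nominal trajectory every inter-cluster path difference $\mathrm{diff}(\mathrm{p}(\cdot,\cdot))$ appearing in $G$ collapses to $\pm\subscr{x}{nom}(t)$, so that each cosine factor produced by differentiating $G$ equals $\cos(\subscr{x}{nom}(t))$, while the residual chain-rule coefficients and edge weights are constants. Hence $J_\text{inter}(t)=\cos(\subscr{x}{nom}(t))\,M$ for a \emph{constant} matrix $M$. Verifying carefully that all sine arguments in $G$ reduce to the single scalar $\pm\subscr{x}{nom}(t)$ on the manifold, so that no residual time dependence survives in $M$, is the delicate part of the argument.

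With this factorization in hand the remaining steps are elementary. Since $J_\text{intra}=\alpha I$ is scalar it commutes with $M$, and $\cos(\subscr{x}{nom}(t_1))M$ commutes with $\cos(\subscr{x}{nom}(t_2))M$ for all $t_1,t_2$; thus the family $\{\alpha I+\cos(\subscr{x}{nom}(t))M\}$ is commuting and the state transition matrix is
\[
\Phi(t,t_0)=\exp\!\left(\alpha(t-t_0)I + c(t,t_0)\,M\right)=e^{\alpha(t-t_0)}\exp\!\left(c(t,t_0)\,M\right),
\]
with $c(t,t_0)=\int_{t_0}^{t}\cos(\subscr{x}{nom}(\tau))\,\mathrm{d}\tau$. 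By statement (ii) of Lemma \ref{lemma: inter cluster nom traj} one has $|c(t,t_0)|\le 2\,\bar a^{-1}\log\!\big((\bar\omega+\bar a)/(\bar\omega-\bar a)\big)=:2C$ uniformly in $t_0$, so $\|\exp(c(t,t_0)M)\|\le e^{2C\|M\|}$ and therefore $\|\Phi(t,t_0)\|\le e^{2C\|M\|}\,e^{\alpha(t-t_0)}$. Since $\alpha<0$ this is a uniform exponential bound, so the transverse linearization is uniformly exponentially stable; by the indirect Lyapunov method for time-varying systems \cite{HKK:02}, the equilibrium $\subscr{x}{intra}=0$, and hence $\mc S_{\mc P}$, is locally exponentially stable. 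A useful sanity check is that Theorem \ref{thm: stability based on Lyapunov} is vacuous in this regime — with $J_\text{intra}=\alpha I$ one obtains $X=(2|\alpha|)^{-1}I$ and its right-hand side collapses to the constant $2$ — confirming that it is the commutativity route, rather than the earlier perturbation bound, that delivers the conclusion.
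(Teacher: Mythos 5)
Your proof is correct, and it takes a genuinely different route from the paper's, although both rest on the same structural facts. The paper's proof is a two-line corollary of the machinery built for Theorem \ref{thm: stability based on Lyapunov}: it reuses the time-varying Lyapunov function $V(\subscr{x}{intra},t) = \subscr{x}{intra}^\transpose \Gamma(t) \subscr{x}{intra}$, where $\dot\Gamma$ is chosen to cancel the inter-cluster term $\cos(\subscr{x}{nom})(J_\text{inter}^\transpose\Gamma + \Gamma J_\text{inter})$ in \eqref{eq:Vdot}, so that homogeneity gives $\dot V = \subscr{x}{intra}^\transpose(J_\text{intra}^\transpose\Gamma + \Gamma J_\text{intra})\subscr{x}{intra} + O(\|\subscr{x}{intra}\|^3) = 2\alpha\, \subscr{x}{intra}^\transpose\Gamma \subscr{x}{intra} + O(\|\subscr{x}{intra}\|^3)$, which is negative near the origin because $\Gamma(t)$ remains uniformly positive definite and bounded; that boundedness is exactly where Lemma \ref{lemma: inter cluster nom traj}(ii) enters, through the closed form $\Gamma(t) = \exp\bigl(-c(t)M^\transpose\bigr)\Gamma(0)\exp\bigl(-c(t)M\bigr)$ with $c(t)=\int_0^t\cos(\subscr{x}{nom}(\tau))\dif\tau$. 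You instead integrate the linearized system exactly, using the fact that the family $\{\alpha I + \cos(\subscr{x}{nom}(t))M\}$ is commuting, and bound the transition matrix directly. Both arguments hinge on (a) the factorization $J_\text{inter}(t) = \cos(\subscr{x}{nom}(t))\,M$ with $M$ constant, which you flag as the delicate step but which is already supplied by Lemma \ref{lemma: linearized intra cluster} (with two clusters $\mc C$ is a singleton, and $\cos$ is even, so the sign ambiguity in $\pm\subscr{x}{nom}$ is immaterial), (b) the boundedness of $\int_0^t \cos(\subscr{x}{nom})\dif\tau$, and (c) commutativity with $\alpha I$ — note that the paper's closed-form solution for $\Gamma(t)$, cited from Rugh, implicitly uses the very same commutativity you invoke. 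What your version buys is an explicit, quantitative estimate $\|\Phi(t,t_0)\|\le e^{2C\|M\|}e^{\alpha(t-t_0)}$, i.e., explicit overshoot constant and decay rate $\alpha$; what the paper's version buys is brevity, since everything is inherited from the previous proof. One minor imprecision in your side remark: Theorem \ref{thm: stability based on Lyapunov} is not \emph{vacuous} when $\subscr{J}{intra}=\alpha I$; its right-hand side collapses to the constant $2$, so condition \eqref{eq: bound frequency} can still hold or fail depending on $\bar\omega$, $\bar a$, and $\|\subscr{J}{inter}\|$. The accurate statement is that it is not automatically satisfied in this regime, which is precisely why a separate argument — yours or the paper's — is needed.
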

\medskip

We provide an example that illustrates the
stability conditions derived in Theorem~\ref{thm: stability based on
  Lyapunov}.

\begin{figure}[t]
  \centering 
  \subfigure[]{
    \includegraphics[width=.45\columnwidth]{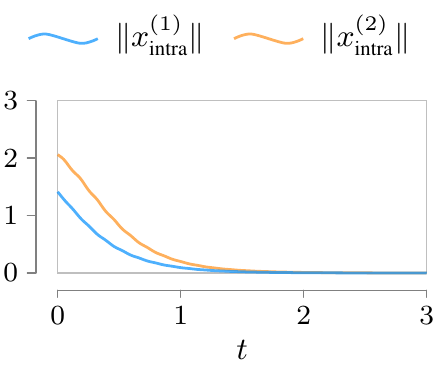}
    \label{fig: stability frequencies example}}
  \subfigure[]{
    \includegraphics[width=.46\columnwidth]{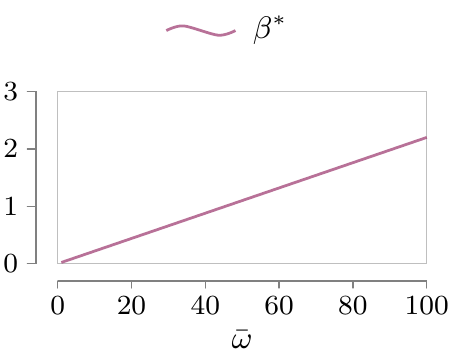}
    \label{fig: beta omega bar}} 
  \caption{For the network in Example \ref{ex: perturbation thm}, Fig. \ref{fig: stability frequencies example} illustrates the stability of $\mc S_{\mc P}$ when $\alpha_{1}=\alpha_{2}=\beta= \omega_1 = 1$ and $\omega_2=47$, as predicted by the condition in Theorem \ref{thm: stability based on Lyapunov}. For the same network and weights, Fig. \ref{fig: beta omega bar} shows the largest value of inter-cluster weights $\beta^*$ that satisfies \eqref{eq:
      bound frequency} with equality. As predicted by Theorem  \ref{thm: stability of Sp with
    limit omega} and Theorem \ref{thm: stability based on Lyapunov}, stability of the cluster synchronization manifold $\mc S_{\mc P}$ is preserved when $\bar \omega$ grows with the inter-cluster weights.
      }
\end{figure}

\begin{figure}[t]
  \centering 
  \includegraphics[width=1\columnwidth]{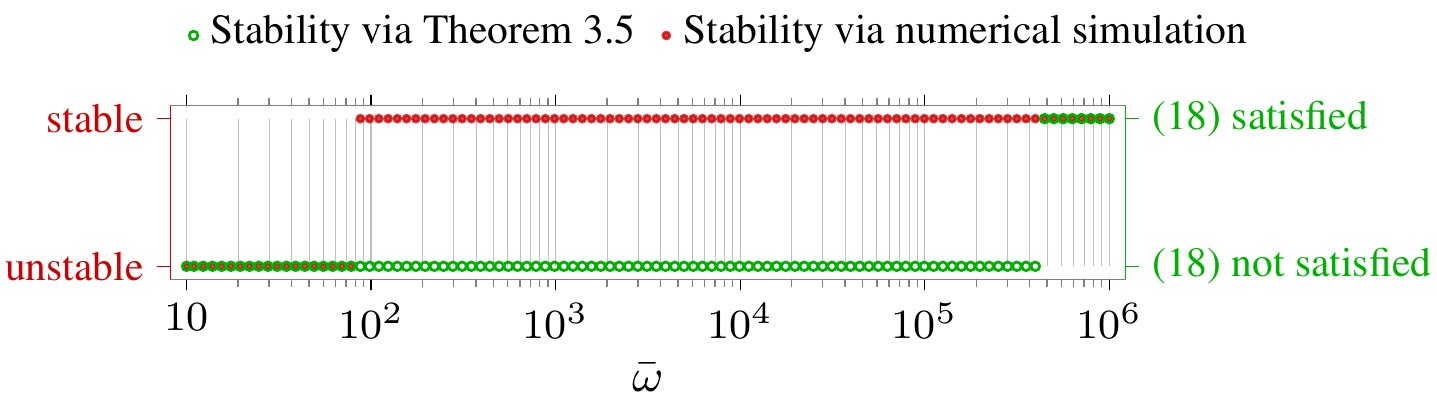}
  \caption{For the network in Example \ref{ex: perturbation thm}, we
    let $\alpha_{1}=\beta=1$ and $\alpha_{2}=10^{-4}$ and plot, as a
    function of $\bar \omega$, the stable configurations predicted by
    Theorem \ref{thm: stability based on Lyapunov} (green) and those
    found numerically.  For each value of $\bar \omega$,
      we assess numerical stability (in red) by making use of the
      Floquet stability theory \cite[Chapter 5]{WJR:96} and by
      resorting to statement (i) in Lemma \ref{lemma: inter cluster
        nom traj}. This is possible because the partition in Example
      \ref{ex: perturbation thm} contains two clusters. Although
    condition \eqref{eq: bound frequency} is conservative, it captures
    the effect of large $\bar \omega$ on the stability of
    $\mc S_{\mc P}$.  }
  \label{fig: example comparison stability frequency}
\end{figure}

\begin{example}{\bf \emph{(Heterogeneity of natural frequencies
      improves stability of the cluster synchronization manifold)}}\label{ex:
    enforcing stability synchronization manifold}
  Consider the network of Kuramoto oscillators in Example \ref{ex:
    perturbation thm}. Fig. \ref{fig: stability frequencies example} illustrates that the
  cluster synchronization manifold is asymptotically stable when the
  condition in Theorem \ref{thm: stability based on Lyapunov} is
  satisfied. Fig. \ref{fig: beta omega bar} illustrates the tradeoff
  in the latter stability condition between the natural frequency
  $\bar \omega$ and the inter-cluster strength measured by
  $\beta^{*}$, which denotes the largest inter-cluster weight
  $\beta$ (see Example \ref{ex: perturbation thm}) such that
  \eqref{eq: bound frequency} is still satisfied. Further, we show in
  Fig. \ref{fig: example comparison stability frequency} that, while
  being conservative, condition \eqref{eq: bound frequency} captures
  the fact that stability of the cluster synchronization manifold can
  be recovered by increasing $\bar \omega$. Namely, choosing
  the same network weights that yield instability as in Fig. \ref{fig: convergence alpha unstable},
  we show that stability of the cluster synchronization manifold is recovered as the difference in natural frequencies grows.
  \oprocend
\end{example}

We conclude this section with a discussion of cluster synchronization
in asymmetric networks and identical nodes.

\begin{remark}{\bf \emph{(Extension to networks with asymmetric
      weights)}}\label{remark: asymmetric}
Symmetry of the network weights is typically exploited to provide
conditions for the stability of the full synchronization manifold in
networks of Kuramoto oscillators \cite{Doerfler2014}. We rely on the
symmetry assumption (A1) to derive statement (i) in Lemma \ref{lemma:
  intra cluster dynamics}, which supports our main theorems. However,
these results remain valid for bidirected
  graphs,\footnote{A bidirected graph is a directed graph where
    $(i,j) \in \mc E$ implies $(j,i) \in \mc E$. The adjacency matrix
    of a bidirected graph needs not be symmetric.} 
  provided that the Jacobian
  $J_{\text{intra}}$ can be proven to be Hurwitz. In other words,
  Assumption (A1) is used to guarantee stability of the isolated
  clusters, and not of the cluster configuration.
  \oprocend
\end{remark}

\begin{remark}{\bf \emph{(Cluster synchronization in networks of
      identical oscillators)}}
  This paper focuses on heterogeneous oscillators and leverages
  mismatches in the natural frequencies and the network weights to
  characterize the stability of the cluster synchronization
  manifold. Yet, cluster synchronization can also arise in networks of
  homogeneous Kuramoto oscillators, where all units have equal natural
  frequencies and all edges have equal weight (e.g., see
  Fig. \ref{fig: identical frequency}). With the exception of Theorem
  \ref{thm: stability of Sp with limit omega}, which is also
  applicable in the case of identical edge weights, our stability
  results cannot predict cluster synchronization in networks of
  identical oscillators, a question that we leave as the subject of
  future investigation.
  \oprocend
\end{remark}

\begin{figure}[t]
  \centering 
  \subfigure[]{
    \includegraphics[width=.38\columnwidth]{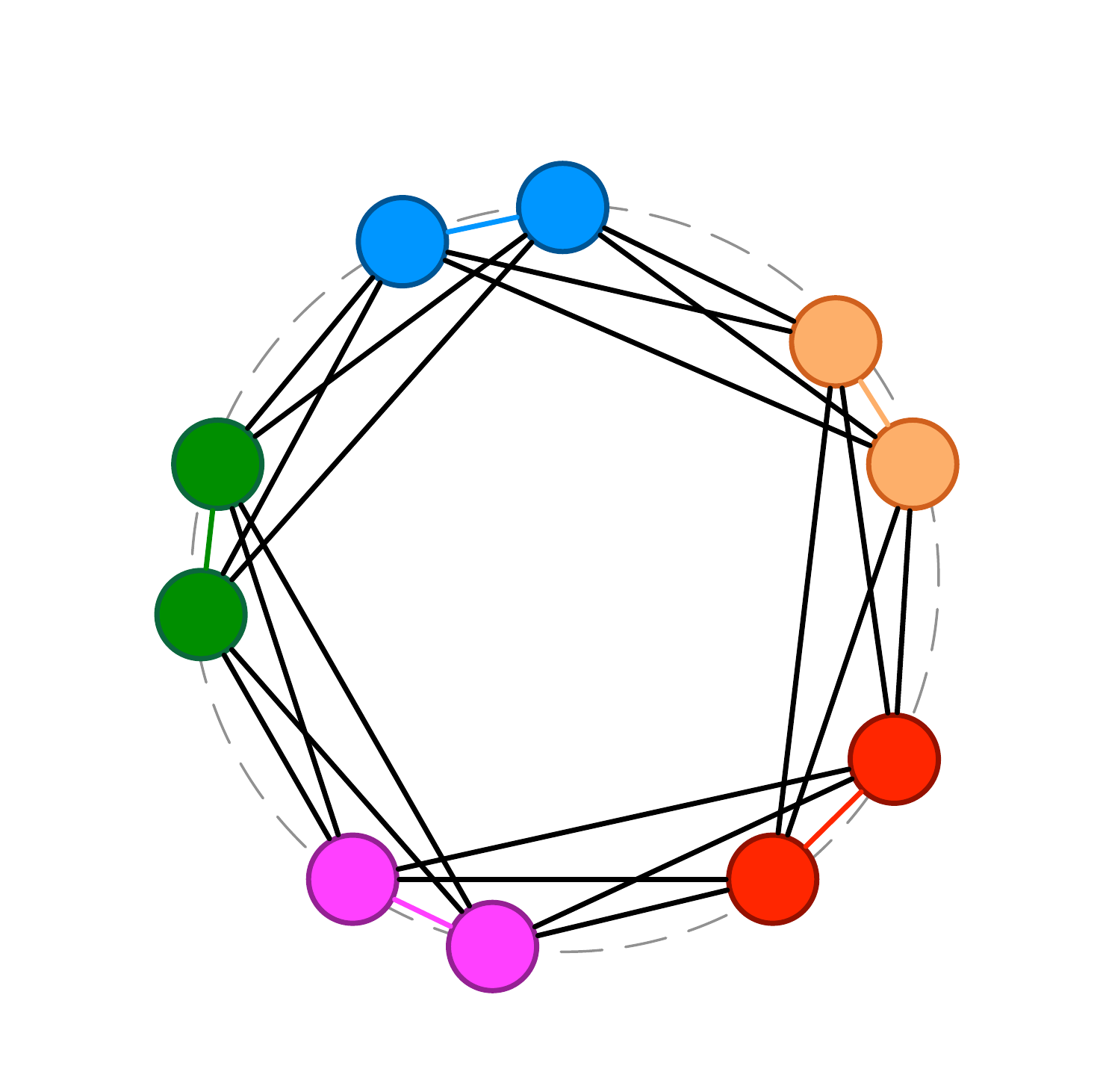}
    \label{fig: network identical frequency}}\;\;
  \subfigure[]{
    \includegraphics[width=.46\columnwidth]{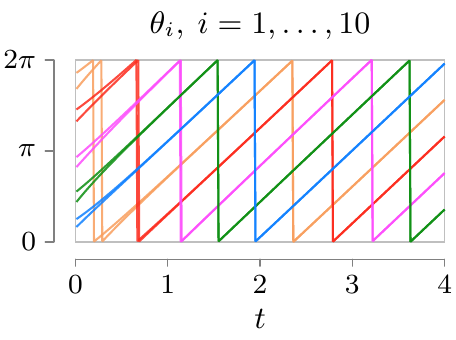}
    \label{fig: simulation identical frequency}} 
  \caption{Fig. \ref{fig: network identical frequency}
      illustrates a network of $10$ Kuramoto oscillators with
      partition
      $\mc P = \{\mc P_1, \mc P_2, \mc P_3, \mc P_4, \mc P_5 \}$,
      where each cluster is color-coded. All oscillators have
      identical natural frequency $\omega = 3$ and all edges have
      unit weight. As illustrated in Fig. \ref{fig: simulation
        identical frequency}, the cluster synchronization manifold
      associated to $\mc P$ is stable, showing that cluster
      synchronization is possible even in networks of identical
      Kuramoto oscillators with identical edge weights.}\label{fig:
    identical frequency}
\end{figure}

\section{Conclusion and future work}\label{sec: conclusion} 
In this work we characterize conditions for the stability of cluster
synchronization in networks of oscillators with Kuramoto dynamics,
where multiple synchronized groups of oscillators coexist in a
connected network. We derive conditions showing that the cluster
synchronization manifold is locally exponentially stable when (i) the
intra-cluster coupling is sufficiently stronger that the inter-cluster
coupling, (ii) the differences of natural frequencies of the
oscillators in disjoint clusters are sufficiently large, or, (iii) in
the case of two clusters, if the intra-cluster dynamics is
homogeneous. To the best of our knowledge, our results are the first
to characterize the stability of the cluster synchronization manifold
in sparse and weighted networks of heterogeneous Kuramoto oscillators.

Directions of future research include the characterization of tighter
stability bounds, the design of methods to control the formation of
time-varying synchronized clusters, and the extension of Theorem
\ref{thm: stability based on Lyapunov} to an arbitrary number of
clusters.

\begin{appendix}
  In this section we provide the proofs of the results presented in
  Section \ref{sec: section 3}, together with some instrumental
  lemmas.
  
  \subsection{Proofs of the results in Section \ref{sec: perturbation}}
  \begin{pfof}{Lemma \ref{lemma: intra cluster dynamics}}
    \emph{Proof of statement (i)}. Notice that the block-diagonal form
    of the Jacobian matrix $J_\text{intra}$ follows
    directly from the form of $F(x_{\text{intra}})$ in \eqref{eq:
      intra dynamics}. Therefore, the stability of $J_\text{intra}$ is
    equivalent to the stability of the diagonal blocks
    $J_{k}$. Let $\theta^{(k)}$ be the vector of
      $\theta_i$, $i\in\mathcal{P}_k$ and, by Assumption (A2), let
      $\omega_k$ be the natural frequency of any oscillator in
      $\mc P_k$. From \eqref{eq: kuramoto}, we
      write the phase dynamics of the $k$-th cluster as (see \cite{AJ-NM-MB:04})
    \begin{align*}
      \dot \theta^{(k)} = \omega_k \1- B_{k}\, \mathrm{diag}(\{a_{ij}
      \}_{(i,j)\in\mc E_{k}}) \sin (B_{k}^\transpose\theta^{(k)}) .
    \end{align*}
  Because the phase differences satisfy
  $ x_{\text{intra}}^{(k)} = B_{\text{span},k}^\transpose
  \theta^{(k)}$ and $x^{(k)} = B_{k}^\transpose\theta^{(k)}$, we have
     \begin{equation}\label{eq: equivalent xspan}
       \dot x_{\text{intra}}^{(k)} = - B_{\text{span},k}^\transpose
       B_{k}\, \mathrm{diag}(\{a_{ij} \}_{(i,j)\in\mc E_{k}})  \sin (x^{(k)}),
     \end{equation} 
     where we have used the property
       $B_{\text{span},k}^\transpose \1=0$.  Using \eqref{eq:
       Tintra}, the Jacobian matrix of \eqref{eq: equivalent xspan}
     computed at $x_{\text{intra}}^{(k)} = 0$ reads as
     \begin{align}\label{eq: Jk}
       J_k= - B_{\text{span},k}^\transpose B_{k}\,
       \mathrm{diag}(\{a_{ij} \}_{(i,j)\in\mc E_{k}})
       T_{\text{intra},k} . 
     \end{align}
     Recall that the Laplacian matrix of the graph
       $\mc G _k$ satisfies
     \begin{align*}
       \mc L_{\mc G_{k}} = B_{k}\,\mathrm{diag}(\{a_{ij} \}_{(i,j)\in\mc
       E_{k}}) B_{k}^\transpose ,
     \end{align*}
     and that, because $\mc G_k$ is connected, the eigenvalues of
     $-\mc L_{\mc G_{k}}$ have negative real part, except one single
     eigenvalue located at the origin with eigenvector $\1$. Define
     the matrix $W_k = [B_{\text{span},k} \; \1]^\transpose$ and
     notice that, because $B_{\text{span},k}^\transpose \1 = 0$ and
     $B_{\text{span},k}$ being full column rank \cite[Theorem
     8.3.1]{Godsil2001}, then $W_k$ is invertible and
     $W_k^{-1} = [(B_{\text{span},k}^\transpose)^{\dag} \;
     (\1^\transpose)^{\dag}]$. Therefore we have
     \begin{align*}
       W_k (-\mc L_{\mc G_{k}}) W_k^{-1} =
       \begin{bmatrix}
         J_k & 0\\
         0 & 0
       \end{bmatrix}
             ,
     \end{align*}
     where we have used that
     $T_{\text{intra},k} = B_k^\transpose
     (B_{\text{span},k}^\transpose)^\dag$ in \eqref{eq: Jk}. This
     shows that $J_k$ contains only the stable eigenvalues of
     $-\mc L_{\mc G_{k}}$.

  \noindent\emph{Proof of statement (ii)}. Notice that, for any
  $(j,z) \in \mc E$ with $j \in \mc P_k$, $z \in \mc P_\ell$, and
  $k \neq \ell$, the difference $\mathrm{dif}\mathrm{f}( \mathrm{p}(j,z))$ in
  $G_{ij}^{(k)}(x_\text{inter},x_\text{inter})$ in equation \eqref{eq:
    Gij} can be rewritten as
  \begin{align*}
    \mathrm{dif}\mathrm{f}( \mathrm{p}(j,z)) = \mathrm{dif}\mathrm{f}(\mathrm{p}(j,k^{*}))\! +\! \mathrm{dif}\mathrm{f}(\mathrm{p}(k^{*}\!,\ell^{*}))
    \!+ \mathrm{dif}\mathrm{f}(\mathrm{p}(\ell^{*}\!,z)),
  \end{align*}
   where $k^*$ and $\ell^*$ are such that
    $\mathrm{p}(k^{*},\ell^{*})$ is the shortest path on $\mc T$
    connecting the clusters $\mc P_k$ and $\mc P_\ell$.  Then,
  \begin{align*}
    &G_{ij}^{(k)}(x_\text{inter},x_\text{inter}) = \\
    &\!\sum_{\substack{\ell =1\\ \ell\ne k}}^{m}\!\sum_{z \in\mc P_\ell}
    \!\! \left[a_{jz} \!\sin(\mathrm{dif}\mathrm{f}(\mathrm{p}(j,k^{*})) \!+\! \mathrm{dif}\mathrm{f}(\mathrm{p}(k^{*}\!,\ell^{*}))
    \!+\! \mathrm{dif}\mathrm{f}(\mathrm{p}(\ell^{*}\!,z))) \right.\\
    &\left.\;\;\;\;\; -
      a_{iz}\sin(\mathrm{dif}\mathrm{f}(\mathrm{p}(i,k^{*}))+\mathrm{dif}\mathrm{f}(\mathrm{p}(k^{*}\!,\ell^{*}))\!
      + \mathrm{dif}\mathrm{f}(\mathrm{p}(\ell^{*}\!,z)))\right]\!.
  \end{align*}
Notice that $\mathrm{dif}\mathrm{f}(\mathrm{p}(i,k^{*}))$ and
$\mathrm{dif}\mathrm{f}(\mathrm{p}(j,k^{*}))$ contain only differences in
$x_{\text{intra}}^{(k)}$, and $\mathrm{dif}\mathrm{f}(\mathrm{p}(\ell^{*},z))$
only differences in~$x_{\text{intra}}^{(\ell)}$.

\noindent Notice that $\sin(a + b) = \sin(a) + \delta$, with
$|\delta| \le |b|$.\footnote{ Letting
    $\delta = \sin(a+b)-\sin(a)$, we have
    $ |\delta| = |2\sin(\frac{b}{2})\cos(a+\frac{b}{2})| \le
    |2\sin(\frac{b}{2})|$, from which the inequality
    $|\delta| \le |b|$ follows.} Then,
  \begin{align}\label{eq: g ell z 2}
    G_{ij}^{(k)} (x_\text{intra},x_\text{inter})  =& \sum_{\substack{\ell
                                                  =1\\ \ell\ne k}}^{m} \sum_{z \in\mc P_\ell} [a_{jz} \left(
    \sin(\mathrm{dif}\mathrm{f}(\mathrm{p}(k^{*},\ell^{*})) +\delta_{jz} \right) \notag \\
    &- a_{iz} \left(
      \sin(\mathrm{dif}\mathrm{f}(\mathrm{p}(k^{*},\ell^{*}))+\delta_{iz} \right)] \notag\\
      =& \!\sum_{\substack{\ell =1\\ \ell\ne k}}^{m}\!\left(\sum_{z \in\mc P_\ell} [(a_{jz}-a_{iz}) \!\sin(\mathrm{dif}\mathrm{f}(\mathrm{p}(k^{*},\ell^{*})))]\right.
     \notag \\
      &\left.+ \sum_{z \in\mc P_\ell} [a_{jz}\delta_{jz}- a_{iz}\delta_{iz}]\right) \notag \\
      \overset{\text{(A3)}}{=}&\sum_{\substack{\ell
                                                  =1\\ \ell\ne k}}^{m}\sum_{z\in\mc P_{\ell}}
      [a_{jz}\delta_{jz}\!-\! a_{iz}\delta_{iz}], \notag
  \end{align}
  where $\delta_{jz}$ and $\delta_{iz}$ are upper bounded by
  $\sqrt{n_{\text{intra},k}} \|x_{\text{intra}}^{(k)}\| +
  \sqrt{n_{\text{intra},\ell}}
  \|x_{\text{intra}}^{(\ell)}\|$. Therefore, we have the following
  bound:
  \begin{align}
    |G_{ij}^{(k)}| &\le\,  \sum_{\substack{\ell =1 \\  \ell \neq k }}^m \left( \sum_{z\in\mc P_{\ell}} a_{jz}|\delta_{jz}| + \sum_{z\in\mc P_{\ell}}  a_{iz}|\delta_{iz}| \right)\notag\\
    &\overset{\text{(A3)}}{\le} 2 \sum_{\substack{\ell =1 \\  \ell \neq k }}^m \sum_{z\in\mc P_{\ell}} a_{jz}\! \left(\sqrt{n_{\text{intra},k}}\|x_{\text{intra}}^{(k)}\|
    +\sqrt{n_{\text{intra},\ell}}\|x_{\text{intra}}^{(\ell)}\|\right) \notag \\
    &= 2 \sum_{\ell = 1}^m  \sqrt{n_{\text{intra},\ell}}\,\, \tilde\gamma^{(k\ell)}_{ij} \|  {x_{\text{intra}}^{(\ell)}} \|,\notag
  \end{align}
  where
  \begin{align*}
    \tilde \gamma^{(k\ell)}_{ij} = \begin{cases}
      \displaystyle \sum_{\substack{\ell=1 \\ \ell\neq k}}^m \sum_{z\in\mc P_{\ell}} a_{jz}, &\text{ if } \ell=k,  \\
      \displaystyle  \sum_{z\in\mc P_{\ell}} a_{jz}, & \text{ otherwise.}
    \end{cases}
  \end{align*}
  To conclude,
  $\| G^{(k)} \| \le \sqrt{n_{\text{intra},k}} \max_{(i,j)\in\mc
    E_{\text{span},k}} \vert G_{ij}^{(k)}\vert$, and, due to (A3),
  $\tilde \gamma^{(k\ell)}_{ij} = \tilde \gamma^{(k\ell)}$ is
  independent of $i$ and $j$. Thus,
  \begin{align}
    \| G^{(k)} \| \le \sum_{\ell = 1}^m2 \max_{r}\, n_{\text{intra},r}
    \, \tilde \gamma^{(k\ell)} \, \|  {x_{\text{intra}}^{(\ell)}} \|. \notag
  \end{align}
This concludes the proof.
\end{pfof}

\begin{pfof}{Theorem \ref{thm: stability of Sp}} 
  The system \eqref{eq: intra dynamics} can be viewed as the
  perturbation via $G(\subscr{x}{intra}, \subscr{x}{inter})$ of
  $\subscr{\dot x}{intra} = F(\subscr{x}{intra})$, which describes the
  dynamics of $m$ disjoint networks of oscillators:
  \begin{equation}\label{eq: interconnected sys}
    \dot  x_{\text{intra}}^{(k)} = F^{(k)}({x_{\text{intra}}^{(k)}}).
  \end{equation}
  The origin of each system \eqref{eq: interconnected sys} is an
  exponentially stable equilibrium, which can be shown with the
  Lyapunov candidate
  \begin{equation*}
    V_k(x_\text{intra}) = 
    {x_{\text{intra}}^{(k)\transpose}}P_{k} {x_{\text{intra}}^{(k)}},
  \end{equation*} 
  where $P_{k}\succ 0$ is such that
  $J_{k}^{\transpose}P_{k}+P_{k}J_{k} = -Q_{k}$ for $Q_{k}\succ 0$. In
  fact, the derivative of $V$ along the trajectories \eqref{eq:
    interconnected sys} is
  \begin{align}\label{eq: Vdot subsystem}
    \dot V_k({x_{\text{intra}}^{(k)}}) &= 
                                         F^{(k)\transpose}({
                                         x_{\text{intra}}^{(k)}})
                                         P_{k}{x_{\text{intra}}^{(k)}}
                                         +
                                         x_{\text{intra}}^{(k)\transpose} P_{k}F^{(k)}({
                                         x_{\text{intra}}^{(k)}})\notag \\
                                       &= x_{\text{intra}}^{(k)\transpose} (J_{k}^{\transpose}P_{k}+P_{k}J_{k})x_{\text{intra}}^{(k)} + O(\|{x_{\text{intra}}^{(k)}}\|^3),
  \end{align}
  and the latter is strictly negative when
  $\| \subscr{x}{intra}^{(k)}\| \le r$ and $r\in \real_{>0}$ is
  sufficiently small. Further, it holds that: (i)
  $\|\partial V_{k}/\partial x_{\text{intra}}^{(k)}\|\le
  2 \lambda_{\text{max}}(P_{k}) \|x_{\text{intra}}^{(k)}\|$, (ii)
  $\dot{V}_{k}({x_{\text{intra}}^{(k)}}) \le
  -\lambda_{\text{min}}(Q_{k}) \|x_{\text{intra}}^{(k)}\|^{2}$, and
  (iii) the perturbation terms
  $G^{(k)}({x_{\text{intra}}}, x_\text{inter})$ are linearly bounded
  in $\|{x_{\text{intra}}^{(k)}}\|$ following statement (ii) in Lemma
  \ref{lemma: intra cluster dynamics}. 

  Consider now the following Lyapunov candidate for \eqref{eq: intra
    dynamics}:
  \begin{equation*}
    V(x_\text{intra}) = \sum_{k = 1}^m d_k
    V_k({x_{\text{intra}}^{(k)}}), \;\;\; d_k>0 .
  \end{equation*}
  From \cite[Chapter 9.5]{HKK:02} we have:
  \begin{equation}\label{eq: }
    \dot V(x_\text{intra}) \le -\frac{1}{2} (DS+S^\transpose D) \| x_\text{intra} \|^2,
\end{equation} 
where $D = \mathrm{diag}(d_1, \dots, d_m)$, and $S$ satisfies
\begin{align}\label{eq: matrix S temp}
  S = [s_{k\ell}] = \begin{cases}
    \frac{\lambda_{\text{min}}(Q_{k})}{\lambda_{\text{max}}(P_{k})}-\gamma^{(kk)}& \text{ if } k=\ell,\\
    -\gamma^{(k\ell)} & \text{ if } k\neq \ell.
  \end{cases}
\end{align}
The origin of \eqref{eq: intra dynamics} is locally exponentially
stable if $S$ is an $M$-matrix \cite[Lemma 9.7 and Theorem
9.2]{HKK:02}. Finally, choosing $Q_k = I$ in \eqref{eq: matrix S temp} yields condition \eqref{eq: matrix S} in Theorem \ref{thm: stability of Sp}.
\end{pfof}

\subsection{Proofs of the results in Section \ref{sec: frequency}}
Let $\mc C$ be the set of connected clusters pairs, that is,
\begin{align*}
  \mc C = \setdef{(\ell, z)}{ \exists\; (i,j) \in \mc E \text{ with } i\in \mc
  P_\ell,  j\in \mc P_z, \text{ and } \ell <z}.
\end{align*}
With a slight abuse of notation, for any $(\ell, z) \in \mc C$, we define
$x^{(\ell z)} = x_{ij}$, for any node $i \in \mc P_\ell$ and
$j \in \mc P_z$.
\begin{lemma}{\bf \emph{(Linearized intra-cluster
      dynamics)}}\label{lemma: linearized intra cluster}
  The linearization of the intra-cluster dynamics \eqref{eq: intra
    dynamics} around the trajectory $x_\text{intra} = 0$ and
  $x_\text{inter} = x_\text{nom}$ reads as follows:
  \begin{align}\label{eq: linear part}
    \dot x_\text{intra} = \left(  J_\text{intra} + \subscr{J}{inter}\right) x_\text{intra},
  \end{align}
  where $J_{\text{intra}}$ is defined in Lemma \ref{lemma: intra
    cluster dynamics}, and
  \begin{align*}
    \subscr{J}{inter} = \left.\frac{\partial G}{\partial x_{\text{intra}}} \right
    \vert_{\shortstack[l]{ $\scriptstyle x_{\text{intra}}=0$ \\
    $\scriptstyle  x_{\text{inter}} =  x_{\text{nom}}$}} \triangleq \sum_{(\ell, z)\in \mc C} \cos( x^{\,(\ell z)})
    \, J_{\text{inter}}^{(\ell z)} .
  \end{align*}
\end{lemma}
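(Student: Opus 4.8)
The plan is to linearize the intra-cluster dynamics \eqref{eq: intra dynamics} about the reference $(\subscr{x}{intra},\subscr{x}{inter})=(0,\subscr{x}{nom})$, regarding $\subscr{x}{inter}=\subscr{x}{nom}(t)$ as a known (time-varying) forcing term and retaining only first-order terms in $\subscr{x}{intra}$. Writing $\dot x_\text{intra}=F(x_\text{intra})+G(x_\text{intra},x_\text{inter})$, this gives
\begin{align*}
  \dot x_\text{intra} = \left( \left.\frac{\partial F}{\partial x_\text{intra}}\right|_{x_\text{intra}=0} + \left.\frac{\partial G}{\partial x_\text{intra}}\right|_{\substack{x_\text{intra}=0\\ x_\text{inter}=x_\text{nom}}} \right) x_\text{intra}.
\end{align*}
Since $F$ depends only on $x_\text{intra}$, its Jacobian at the origin is exactly $J_\text{intra}$ by statement (i) of Lemma \ref{lemma: intra cluster dynamics}. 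It then remains to evaluate $\partial G/\partial x_\text{intra}$ along the reference and to show that it carries the claimed structure, which is precisely the definition of $\subscr{J}{inter}$.

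First I would differentiate $G$ entrywise using \eqref{eq: Gij}. Each component $G_{ij}^{(k)}$ is a sum over $z\notin\mc P_k$ of terms $\pm a_{jz}\sin(\mathrm{diff}(\mathrm{p}(j,z)))$ (and the analogous $i$-terms), so by the chain rule its derivative with respect to $x_\text{intra}$ equals $\pm a_{jz}\cos(\mathrm{diff}(\mathrm{p}(j,z)))\,\partial_{x_\text{intra}}\mathrm{diff}(\mathrm{p}(j,z))$. Because every path difference $\mathrm{diff}(\mathrm{p}(\cdot,\cdot))$ is a \emph{fixed linear} combination of the entries of $x_\text{intra}$ and $x_\text{inter}$, the factor $\partial_{x_\text{intra}}\mathrm{diff}(\mathrm{p}(j,z))$ is a constant row vector, independent of the operating point.

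The key simplification is in evaluating the cosine factors at $x_\text{intra}=0$. Using the additive decomposition $\mathrm{diff}(\mathrm{p}(j,z))=\mathrm{diff}(\mathrm{p}(j,k^*))+\mathrm{diff}(\mathrm{p}(k^*,\ell^*))+\mathrm{diff}(\mathrm{p}(\ell^*,z))$ already employed in the proof of Lemma \ref{lemma: intra cluster dynamics}(ii), the two end segments are built entirely from intra-cluster differences and vanish when $x_\text{intra}=0$, while the middle segment is a sum of inter-cluster tree-edge differences. Since all oscillators of a cluster share a common phase on $\{x_\text{intra}=0\}$, these inter-cluster differences telescope and collapse to the single pairwise difference $x^{(k\ell)}$ between the clusters $\mc P_k\ni j$ and $\mc P_\ell\ni z$; note that $(k,\ell)\in\mc C$ because $a_{jz}\neq 0$ forces the pair to be connected. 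Hence $\cos(\mathrm{diff}(\mathrm{p}(j,z)))|_{x_\text{intra}=0}=\cos(x^{(k\ell)})$, a scalar depending only on the inter-cluster nominal phase difference.

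Finally I would group the contributions by connected cluster pair: collecting, for each $(\ell,z)\in\mc C$, all the constant coefficient vectors whose accompanying cosine factor equals $\cos(x^{(\ell z)})$ (here $\cos(x^{(\ell z)})=\cos(x^{(z\ell)})$, so the orientation of the pair is immaterial and the signs are absorbed into the coefficients) assembles the constant matrix $J_{\text{inter}}^{(\ell z)}$ and factors out $\cos(x^{(\ell z)})$, yielding $\subscr{J}{inter}=\sum_{(\ell,z)\in\mc C}\cos(x^{(\ell z)})J_{\text{inter}}^{(\ell z)}$; for two clusters this reduces to $\cos(\subscr{x}{nom})\,J_{\text{inter}}^{(12)}$. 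The hard part is entirely the bookkeeping of the last three steps, namely tracking which intra-cluster edges enter each path derivative and verifying that the cosine arguments collapse to a single pairwise inter-cluster difference no matter how many intermediate clusters the tree path traverses. Both facts rest only on the linearity of $\mathrm{diff}(\cdot)$ and the telescoping consistency of the cluster phases on $\{x_\text{intra}=0\}$, so no estimate beyond those already established for Lemma \ref{lemma: intra cluster dynamics} is needed.
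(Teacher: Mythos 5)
There is a genuine gap in your proposal, and it is precisely at the step that constitutes the only nontrivial content of the paper's proof. By declaring at the outset that $x_\text{inter}=x_\text{nom}(t)$ is ``a known (time-varying) forcing term,'' you assume that state perturbations never enter the inter-cluster coordinates. But $x_\text{inter}$ is a state variable of the phase-difference dynamics, not an exogenous input: a perturbation of $\theta(0)$ off the manifold $\mc S_{\mc P}$ generically perturbs both $x_\text{intra}$ and $x_\text{inter}$, and the perturbed inter-cluster difference does not return to $x_\text{nom}$ (perturbations along the manifold are not damped; at best they produce a persistent shift). Hence the true linearization of \eqref{eq: intra dynamics} around the trajectory $(0,x_\text{nom})$ is
\begin{align*}
\delta\dot x_\text{intra} = \left(J_\text{intra}+J_\text{inter}\right)\delta x_\text{intra} \;+\; \left.\frac{\partial G}{\partial x_\text{inter}}\right|_{\substack{x_\text{intra}=0\\ x_\text{inter}=x_\text{nom}}}\delta x_\text{inter},
\end{align*}
and the lemma's claim is exactly that the last term is absent. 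This is where Assumption (A3) enters: the paper computes, for any intra-cluster difference $x_{ij}$ with $i,j\in\mc P_\ell$,
\begin{align*}
\left.\frac{\partial G_{ij}}{\partial x_\text{inter}}\right|_{\substack{x_\text{intra}=0\\ x_\text{inter}=x_\text{nom}}} = \sum_{(\ell, z)\in \mc C} \cos( x^{\,(\ell z)}) \sum_{k\in  \mc P_{z}}
  \left[a_{jk} - a_{ik}\right] = 0,
\end{align*}
because each inner sum vanishes by (A3). Without this cancellation, \eqref{eq: linear part} would be an incomplete linearization: the intra-cluster error would be driven at first order by $\delta x_\text{inter}$, and the stability conclusions that Theorems \ref{thm: stability of Sp with limit omega} and \ref{thm: stability based on Lyapunov} draw from \eqref{eq: linear part} alone would not follow.

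The part you do carry out, the computation of $\partial G/\partial x_\text{intra}$ along the nominal trajectory (chain rule on the path differences, collapse of the cosine arguments to the pairwise cluster differences $x^{(\ell z)}$ at $x_\text{intra}=0$, grouping by connected cluster pairs), is correct, and in fact more detailed than the paper, which treats the identity $\partial G/\partial x_\text{intra}=J_\text{inter}=\sum_{(\ell,z)\in\mc C}\cos(x^{(\ell z)})J_\text{inter}^{(\ell z)}$ essentially as the definition of $J_\text{inter}$. But that is the definitional half of the lemma; to repair the proof you must add the computation of $\partial G/\partial x_\text{inter}$ above and invoke (A3), rather than freezing $x_\text{inter}$ at $x_\text{nom}$ by fiat.
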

\begin{proof}
  Linearization of \eqref{eq: intra dynamics} around the trajectory
  $(x_\text{intra}, x_\text{inter})= (0, x_\text{nom})$ yields
  $\partial F / \partial x_\text{intra} = J_\text{intra}$ and
  $\partial G / \partial x_\text{intra} = J_\text{inter}$. The
  remaining derivatives vanish. That is,
  $\partial F / \partial x_\text{inter} = 0$ because $F$ does not
  depend on $x_\text{inter}$, and
  $\partial G / \partial x_\text{inter} = 0$ because of Assumption
  (A3). In fact, for any intra-cluster difference $x_{ij}$ with
  $i,j\in\mc P_{\ell}$, $\ell\in\until{m}$, 
\begin{equation*}
  \left.\frac{\partial G_{ij}}{\partial x_{\text{inter}}} \right
    \vert_{\shortstack[l]{ $\scriptstyle x_{\text{intra}}=0$ \\
    $\scriptstyle  x_{\text{inter}} =  x_{\text{nom}}$}} = \sum_{(\ell, z)\in \mc C} \cos( x^{\,(\ell z)}) \underbrace{\sum_{k\in  \mc P_{z}}
  [a_{jk} - a_{ik}]}_{=0} = 0  .
\end{equation*} 
This concludes the proof.
\end{proof}

We next characterize an asymptotic property of the inter-cluster
differences through the following instrumental result.

  \begin{lemma}{\bf \emph{(Asymptotic behavior of the inter-cluster
        dynamics for large frequency differences)}}\label{lemma: limit
      of omega}
  Let $i\in\mc P_{\ell}$, $j \in \mc P_{z}$, and $\ell \neq z$. Then,
  the inter-cluster difference $x_{ij}$ satisfies
  \begin{align}
    \lim_{|\omega_j - \omega_i| \rightarrow \infty }\; \frac{x_{ij} (t)}{\omega_j - \omega_i} =
     t.
  \end{align}
\end{lemma}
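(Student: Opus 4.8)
The plan is to integrate the phase-difference dynamics directly and to exploit the fact that the coupling term is bounded \emph{uniformly} in the natural frequencies. Starting from \eqref{eq: differences dynamics}, I would write the dynamics of the inter-cluster difference $x_{ij}$ as
\begin{align*}
  \dot x_{ij} = (\omega_j - \omega_i) + g_{ij}(t), \qquad g_{ij}(t) \triangleq \sum_{z=1}^n \left[ a_{jz}\sin(x_{jz}(t)) - a_{iz}\sin(x_{iz}(t)) \right].
\end{align*}
The crucial observation is that, since $|\sin(\cdot)| \le 1$, the coupling term obeys the uniform bound $|g_{ij}(t)| \le \sum_{z=1}^n (a_{jz} + a_{iz}) \triangleq M$, where $M$ is a finite constant that depends only on the (fixed) network weights and, in particular, is independent of the natural frequencies and of the entire coupled trajectory.

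Next I would integrate on $[0,t]$ to obtain
\begin{align*}
  x_{ij}(t) = x_{ij}(0) + (\omega_j - \omega_i)\, t + \int_0^t g_{ij}(s)\, \dif s,
\end{align*}
and then divide by $\omega_j - \omega_i$ to get
\begin{align*}
  \frac{x_{ij}(t)}{\omega_j - \omega_i} = t + \frac{x_{ij}(0)}{\omega_j - \omega_i} + \frac{1}{\omega_j - \omega_i}\int_0^t g_{ij}(s)\,\dif s.
\end{align*}
For any fixed $t$, the initial-condition term vanishes in the limit because $x_{ij}(0)$ is a bounded phase difference on the torus, while the integral term is controlled by $\left| \int_0^t g_{ij}(s)\,\dif s \right| \le M t$, so that dividing by $|\omega_j - \omega_i| \to \infty$ sends it to zero as well (the argument is insensitive to the sign of $\omega_j - \omega_i$, as both correction terms decay in absolute value). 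This yields the claimed limit $t$.

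The step requiring the most care is establishing the uniform bound on $g_{ij}$: one must recognize that, although the trajectories $x_{jz}(t)$ and $x_{iz}(t)$ themselves depend in a complicated way on all the natural frequencies through the fully coupled Kuramoto dynamics, the boundedness of the sine makes $M$ independent both of $\omega$ and of the particular solution. Global existence of the solution, so that the integral is well defined for every $t$, follows from the smoothness of the vector field on $\mathbb{T}^n$, which I would note in passing. Beyond this observation I expect no genuine analytical obstacle; the heterogeneity limit effectively renders the growth of $x_{ij}$ linear at rate $\omega_j - \omega_i$, which is precisely the property needed in the proof of Theorem~\ref{thm: stability of Sp with limit omega} to average the inter-cluster coupling to zero through the factors $\cos(x^{\,(\ell z)})$ appearing in $\subscr{J}{inter}$.
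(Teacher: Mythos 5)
Your proof is correct, and it takes a genuinely different and more elementary route than the paper's. The paper isolates the self-coupling term $-(a_{ij}+a_{ji})\sin(x_{ij})$ and bounds the remaining coupling by the constant $\beta = \sum_{k\neq i,j}[a_{jk}+a_{ik}]$, producing two autonomous comparison systems $\dot{\underline{x}}_{\,ij} = \bar\omega_{ij} - (a_{ij}+a_{ji})\sin(\underline{x}_{\,ij}) - \beta$ and $\dot{\overline{x}}_{ij} = \bar\omega_{ij} - (a_{ij}+a_{ji})\sin(\overline{x}_{ij}) + \beta$; it then solves each by separation of variables, expands the integrand as a geometric series in $1/\bar\omega_{ij}$, and invokes the Comparison Principle to sandwich the true trajectory. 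You instead lump the \emph{entire} coupling into a single disturbance $g_{ij}$, use the uniform bound $|g_{ij}(t)| \le M$ coming from the boundedness of the sine (independent of the frequencies and of the coupled trajectory), integrate once, and divide by $\omega_j - \omega_i$, so that the correction terms are bounded by $\left(|x_{ij}(0)| + Mt\right)/|\omega_j - \omega_i|$ and vanish pointwise in $t$. Your argument is shorter, avoids both the series manipulation and the comparison lemma, and incidentally sidesteps an order-estimate subtlety in the paper's expansion: the integral labelled $O\left(\bar\omega_{ij}^{-1}\right)$ there in fact contains a contribution of size $\beta t$ (the comparison systems drift at effective frequency $\bar\omega_{ij}\mp\beta$, not $\bar\omega_{ij}$), which is harmless for the limit but imprecise as stated. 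What the paper's approach buys is a more structured description of the trajectory — an explicit sandwich between two Adler-type solutions, which in principle quantifies the deviation from linear growth — but since both the lemma and its use in Theorem \ref{thm: stability of Sp with limit omega} only require the pointwise limit $x_{ij}(t)/(\omega_j-\omega_i) \to t$, your direct estimate delivers everything that is needed.
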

\begin{proof}
 Let $\bar \omega_{ij} = \omega_j-\omega_i$. We rewrite \eqref{eq:
  differences dynamics} as
\begin{align}\label{eq: x_inter alternative}
  \dot x_{ij} =&\, \bar \omega_{ij} - (a_{ij}+a_{ji}) \sin( x_{ij}) \notag \\
               &+\sum_{\substack{k\ne i,j}}\left[
                 a_{jk}\sin( x_{jk})-
                 a_{ik}\sin( x_{ik} )\right].
\end{align}
From \eqref{eq: x_inter alternative}, let $\beta =\sum_{\substack{k\ne i,j}}
                 [a_{jk}+
                 a_{ik}]$, and
\begin{align}
	\dot {\underline{x}}_{\,ij} &= \bar \omega_{ij} - (a_{ij}+a_{ji}) \sin(\underline{x}_{\,ij}) - \beta, \label{eq: under x inter}\\
	\dot {\overline{x}}_{ij} &= \bar \omega_{ij} - (a_{ij}+a_{ji}) \sin(\overline{x}_{ij}) + \beta, \label{eq: over x inter}
\end{align}
with ${\underline{x}}_{\,ij}(0) = {\overline{x}}_{ij}(0)={x}_{ij}(0)$.
Integrating \eqref{eq: under x inter} yields
\begin{equation}\label{eq: intergals y t}
\int_{{x}_{ij}(0)}^{{\underline{x}}_{\,ij}(t)} \frac{\dif y}{\bar \omega_{ij} -(a_{ij}+a_{ji})\sin(y)-\beta} = \int_0^t\dif \tau.
\end{equation}
As $|\bar \omega_{ij}|$ grows, it holds that $\vert (a_{ij}+a_{ji}) + \beta \vert < |\bar \omega_{ij}|$. Therefore,
\begin{align*}
\frac{1}{\bar \omega_{ij} -(a_{ij}+a_{ji})\sin(y)-\beta}  = \frac{1}{\bar \omega_{ij}}\left[\frac{1}{1-\frac{(a_{ij}+a_{ji})\sin(y)+\beta}{\bar \omega_{ij}}}\right] \\ = \frac{1}{\bar \omega_{ij}} \sum_{k=0}^\infty \left[\frac{(a_{ij}+a_{ji})\sin(y)+\beta}{\bar\omega_{ij}}\right]^{k}.
\end{align*}
In view of the latter equality, \eqref{eq: intergals y t} becomes
\begin{align*}
t = &\,\frac{{\underline{x}}_{\,ij}(t)-{x}_{ij}(0)}{\bar \omega_{ij}} \\
&+ \frac{1}{\bar \omega_{ij}} \underbrace{\int_{{x}_{ij}(0)}^{{\underline{x}}_{\,ij}(t)} \sum_{k=1}^\infty \left[\frac{(a_{ij}+a_{ji})\sin(y)+\beta}{\bar\omega_{ij}}\right]^k \!\dif y}_{O\left(\bar \omega_{ij}^{-1} \right)},
\end{align*}
or, equivalently,
\begin{align}\label{eq: x underline}
{\underline{x}}_{\,ij}(t) =\, \bar \omega_{ij} \, t + {x}_{ij}(0)
+ O\left(\bar \omega_{ij}^{-1} \right).
\end{align}
Similarly, the solution of \eqref{eq: over x inter} has the form in
\eqref{eq: x underline}. Finally, using the Comparison Principle
\cite[Lemma 3.4]{HKK:02}, it holds that
$\underline{x}_{\,ij}(t) \le x_{ij}(t) \le \overline{x}_{ij}(t)$ for
all $t\ge 0$. Hence, $\frac{x_{ij}(t)}{\bar \omega_{ij}} \to t$
as $|\bar \omega_{ij}|\to \infty$ and this concludes the proof.
\end{proof}

We are now ready to prove Theorem \ref{thm: stability of Sp with limit
  omega}.

\begin{pfof}{Theorem \ref{thm: stability of Sp with limit omega}}
  Consider the Lyapunov candidate
  $V(\subscr{x}{intra},t) = \subscr{x}{intra}^\transpose \Gamma(t)
  \subscr{x}{intra}$, and notice that, using \eqref{eq: linear part},
  \begin{align}\label{eq: Vdot}
    &\dot V(\subscr{x}{intra},t) =\; \dot x_\text{intra}^\transpose \Gamma x_\text{intra} + x_\text{intra}^\transpose \Gamma
                   \dot x_\text{intra} + x_\text{intra}^\transpose \dot \Gamma x_\text{intra} \notag\\
                   &\hspace{.5cm} = x_\text{intra}^\transpose \!
                   \left[\vphantom{\sum_{(\ell, z)\in \mc C}} \,
                   J_\text{intra}^\transpose  \Gamma + \Gamma
                   J_\text{intra} + \dot  \Gamma \right.\notag\\
                 &\hspace{0.4cm}\left.+\!\!  \!\sum_{(\ell, z)\in \mc C}\!\!\cos ( x^{\,(\ell
                   z)} )\! \left( J_{\text{inter}}^{(\ell
                   z)\transpose} \Gamma +  \Gamma
                   J_{\text{inter}}^{(\ell z)}\right)\! \right]\!x_\text{intra}\!+\! O(\|x_\text{intra}\|^3).
  \end{align}
  Let
  \begin{align}\label{eq: Gamma dot}
    \dot \Gamma = -\sum_{(\ell, z)\in \mc C} \cos (
    x^{\,(\ell z)} )\!\left( J_{\text{inter}}^{(\ell z)\transpose} \Gamma + \Gamma
    J_{\text{inter}}^{(\ell z)} \right).
  \end{align}
  When the inter-cluster natural frequencies satisfy
  $|\omega_i- \omega_{j}| \rightarrow \infty$ for all $i,j$, then
  $\Gamma (t) \rightarrow \Gamma(0)$ for all times~$t$. In fact,
  integrating both sides of \eqref{eq: Gamma dot} and substituting
  $\Gamma(t) = \Gamma(0)$~yields
  \begin{align*}
    \int_0^t \! \dot \Gamma\dif \tau \!&= \Gamma(t) - \Gamma(0) = \Gamma (0) -
                           \Gamma (0) = 0 \\ &=\! - \!\!\sum_{(\ell, z)\in \mc C} \int_0^t
                                                                            \cos(x^{(\ell z)}) \left( J_{\text{inter}}^{(\ell
                                                                            z)\transpose} \Gamma +  \Gamma
                                                                            J_{\text{inter}}^{(\ell z)}\right)\dif \tau\\
                     &=\! - \!\!\sum_{(\ell, z)\in \mc C} \!\!\left( J_{\text{inter}}^{(\ell
                       z)\transpose} \Gamma(0) +  \Gamma(0)
                       J_{\text{inter}}^{(\ell z)}\right)\! \int_0^t\!
                       \cos(x^{(\ell z)})\dif \tau,
  \end{align*}
  which holds true because $\int \cos(x^{(\ell z)})\dif \tau = 0$ due to Lemma
  \ref{lemma: limit of omega}. Because $\subscr{J}{intra}$ is stable,
  we conclude that, when the inter-cluster natural frequencies satisfy
  $|\omega_i- \omega_{j}| \rightarrow \infty$ for all $i,j$,
  $\dot \Gamma = 0$, and there exists $\Gamma(0)$ such that \eqref{eq:
    Vdot} is strictly negative. This concludes the proof of the
  claimed statement.
\end{pfof}

\begin{pfof}{Lemma \ref{lemma: inter cluster nom traj}}
   When $\subscr{x}{intra} = 0$, the differential equation \eqref{eq:
     x_inter alternative} reduces to
   $\dot {x}_\text{inter} = \bar \omega - \bar a \sin( x_\text{inter})$,
   which is a separable differential equation with solution as in
   \eqref{eq: h(t)}.  To show that the period of \eqref{eq: h(t)} is
   equal to $T = 2\pi/\sqrt{\bar \omega^2-\bar a^2}$, we assume,
   without loss of generality, that $\tau = 0$. It is easy to see that,
   because $\tan(t)$ is $\pi$-periodic,
   $x_\text{nom}(t)= x_\text{nom}(t + 2\pi/\sqrt{\bar \omega^2-\bar
     a^2})$.  Further, notice that the variable substitution
   $z = x_\text{nom}$ in $\int_0^t \cos( x_\text{nom}) \,\dif \tau$
   yields
\begin{align}\label{eq: integral cos equality}
\int_0^t \cos( x_\text{nom}(\tau)) \,\dif \tau &= \int_{ x_\text{nom}(0)}^{ x_\text{nom}(t)} \frac{\cos(z)}{\bar \omega - \bar a \sin(z)} \,\dif z \nonumber \\
& = \frac{1}{\bar a} \log\left(\frac{\bar \omega - \bar a \sin(x(0))}{\bar \omega - \bar a \sin( x_\text{nom}(t))} \right),
\end{align}
which implies the bound \eqref{eq: bound on cos(h) nom}. To prove that
$\cos( x_\text{nom})$ has zero time average, it suffices to substitute
$t=T$ in \eqref{eq: integral cos equality}.
\end{pfof}

\begin{pfof}{Theorem \ref{thm: stability based on Lyapunov}}
Consider the Lyapunov candidate
  $V(\subscr{x}{intra},t) = \subscr{x}{intra}^\transpose \Gamma(t)
  \subscr{x}{intra}$, and notice that, using \eqref{eq: linear part},\begin{align}
\dot V(x_\text{intra},t) 
 =&\; x_\text{intra}^\transpose [\, J_\text{intra}^\transpose \Gamma + \Gamma J_\text{intra} + \dot \Gamma  \notag\\
   +& \cos( x_{\text{nom}})( J_\text{inter}^\transpose \Gamma + \Gamma J_\text{inter})  \,]x_\text{intra}+ O(\|x_\text{intra}\|^3). \label{eq:Vdot}
\end{align}
Let
$\dot \Gamma = -\cos( x_{\text{nom}})( J_\text{inter}^\transpose
\Gamma + \Gamma J_\text{inter})$ and notice that, following
\cite[Exercise 3.9 and Property 4.2]{WJR:96}, its solution satisfies

 \begin{align*}
\Gamma(t) =&  \exp\left[{ - \int_0^t \!\cos( x_{\text{nom}}(\tau))\,J_\text{inter}^\transpose\, \dif \tau }\right] \Gamma(0)\\
& \,\cdot \exp\left[{ - \int_0^t  \!\cos( x_{\text{nom}}(\tau)) J_\text{inter}\, \dif \tau}\right]\!.
\end{align*}
This implies that  $V(x_\text{intra},t)$ is a Lyapunov function for \eqref{eq: linear part} because, by Lemma \ref{lemma: inter cluster nom traj}, $ \int_0^t \cos( x_{\text{nom}}(\tau))\dif \tau$ is bounded.
Furthermore, notice that
\begin{align*}
& \exp\left[{ - \int_0^t \!\cos( x_{\text{nom}}(\tau))\,J_\text{inter}^\transpose\, \dif \tau }\right] \\
& = I + \underbrace{\sum_{k=1}^\infty\frac{(J_\text{inter}^\transpose)^k}{k!}\left( - \int_0^t \!\cos( x_{\text{nom}}(\tau))\, \dif \tau\right)^k}_{\Delta}.
\end{align*}
Thus, \eqref{eq:Vdot} can equivalently be written as
$
\dot V = x_\text{intra}^\transpose [ J_\text{intra}^\transpose \Gamma(0) \!+\! \Gamma(0) J_\text{intra} \!+\! M \,]x_\text{intra}\!+ O(\|x_\text{intra}\|^3),
$
where 
$
M = J_\text{intra}^\transpose \Delta \Gamma(0) \Delta^\transpose + \Delta \Gamma(0) \Delta^\transpose J_\text{intra} + J_\text{intra}^\transpose (\Delta \Gamma(0)+\Gamma(0) \Delta) + (\Delta \Gamma(0)+\Gamma(0) \Delta) J_\text{intra}.
$
using the triangle inequality and Lemma \ref{lemma: inter cluster nom
  traj}, we obtain
\begin{align*}
\|\Delta\|&=\left\|\sum_{k=1}^\infty\frac{(J_\text{inter}^\transpose)^k}{k!}\left( - \int_0^t \!\cos( x_{\text{nom}}(\tau))\, \dif \tau\right)^k\right\| \\
&\le \sum_{k=1}^\infty\frac{\|J_\text{inter}\|^k}{k!}\left| \int_0^t \!\cos( x_{\text{nom}}(\tau))\, \dif \tau\right|^k \\
 &= e^{ \left|\int_0^t \!\cos( x_{\text{nom}}(\tau))\, \dif \tau\right| \|J_\text{inter} \|}-1\le e^{\frac{1}{\bar a} \log\left(\frac{\bar \omega + \bar a}{\bar \omega - \bar a} \right) \|J_\text{inter} \|}-1.
\end{align*}
Because $J_\text{intra}$ is stable, there always exists $\Gamma(0)\succ 0$ such that $J_\text{intra}^\transpose \Gamma(0) + \Gamma(0) J_\text{intra} = -Q$ for any $Q\succ 0$. Thus,
\begin{align}\label{eq:Vdotsim}
\dot V \le & \; (-\lambda_{\text{min}}(Q) + \|M\| )\|x_\text{intra}\|^{2} +O(\|x_\text{intra}\|^3).
\end{align}
By a simple Lyapunov argument, the cluster synchronization manifold $\mc S_{\mc P}$ is locally exponentially stable
if $\|M\|< \lambda_{\text{min}}(Q)$.
In addition, $\|M\|$ can be upper bounded as
\begin{align*}
\| M \| \le& 2 \|J_{\text{intra}}\| \|\Gamma(0)\| \|\Delta\|(\|\Delta\|+2)\\
\le& 2 \lambda_{\text{max}}(\Gamma(0)) \|J_{\text{intra}}\|\left(e^{\frac{2}{\bar a} \log\left(\frac{\bar \omega + \bar a}{\bar \omega - \bar a} \right)\|J_\text{inter} \|}-1\right).
\end{align*}
Thus, a sufficient condition for local exponential stability is
\begin{align*}
2 \lambda_{\text{max}}(\Gamma(0)) \|J_{\text{intra}}\| \left(e^{\frac{2}{\bar a} \log\left(\frac{\bar \omega + \bar a}{\bar \omega - \bar a} \right)\|J_\text{inter} \|}-1\right)< \lambda_{\text{min}}(Q),
\end{align*}
and because the ratio $\lambda_\text{min}(Q)/\lambda_\text{max}(\Gamma(0))$ is maximized for $Q=I$ \cite[Exercise 9.1]{HKK:02}, we have
\begin{align*}
2 \lambda_{\text{max}}(\Gamma(0))\|J_{\text{intra}}\| \left(e^{\frac{2}{\bar a} \log\left(\frac{\bar \omega + \bar a}{\bar \omega - \bar a} \right)\|J_\text{inter} \|}-1\right)< 1,
\end{align*}
from which condition \eqref{eq: bound frequency} follows.
\end{pfof}

\begin{pfof}{Theorem \ref{corollary: stability with homogeneous clusters and large natural frequencies}}
From \eqref{eq:Vdot} and for $\beta \in \real_{>0}$ we have
$
\dot V(x_\text{intra},t) = \; x_\text{intra}^\transpose [\, J_\text{intra}^\transpose \Gamma + \Gamma J_\text{intra}  \,]x_\text{intra}+ O(\|x_\text{intra}\|^3)
  = \; -\beta x_\text{intra}^\transpose\Gamma x_\text{intra} + O(\|x_\text{intra}\|^3),
$
which is negative in a small neighborhood of the origin.
\end{pfof}

\end{appendix}

\bibliographystyle{unsrt}
\bibliography{BIB}

\begin{IEEEbiography}[{\includegraphics[width=1in,height=1.25in,clip,keepaspectratio]{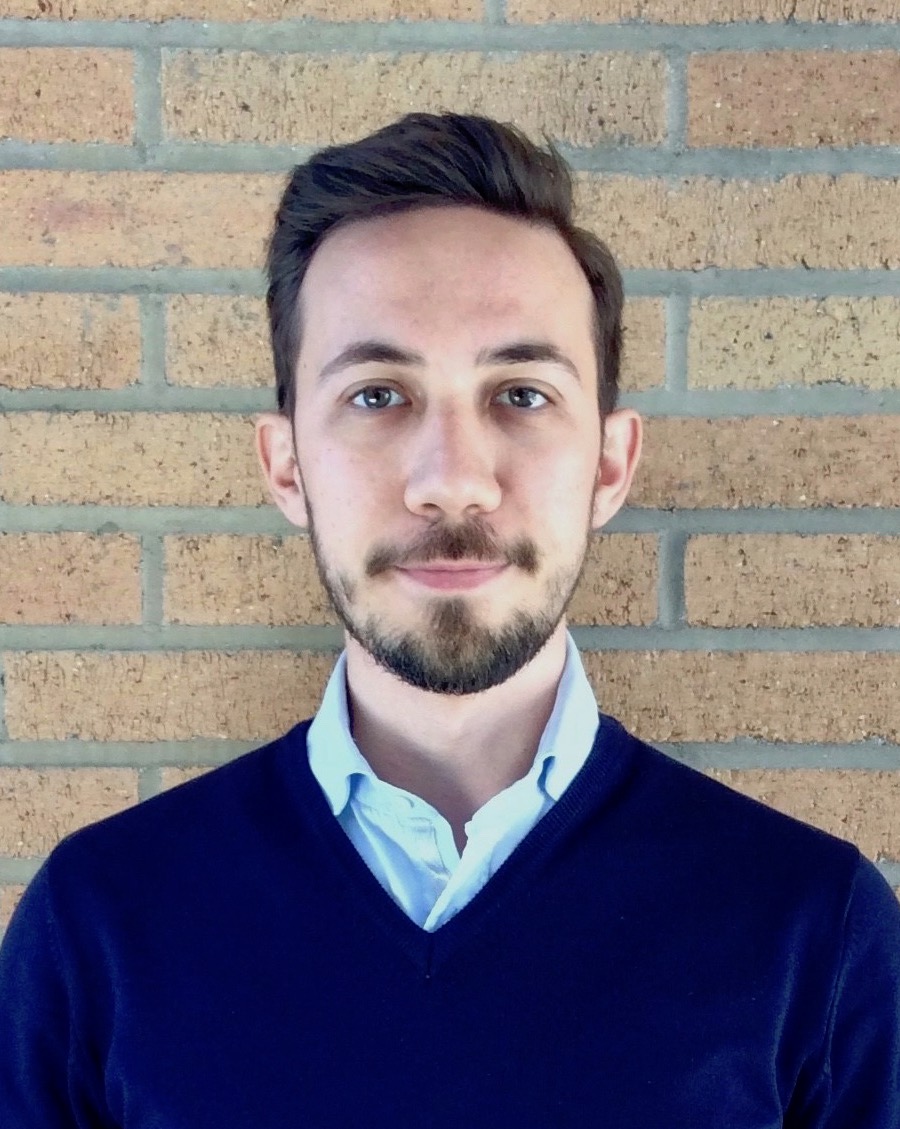}}]{Tommaso Menara}  is a PhD candidate in the Department of Mechanical Engineering at University of California at Riverside. He completed the Laurea Magistrale degree (M.Sc. equivalent) in robotics and automation engineering from the University of Pisa, Italy, in 2016, and the Laurea degree (B.Sc. equivalent) in mechatronics engineering from the University of Padova, Italy, in 2013.
His research interests include control of complex networks and network neuroscience.
\end{IEEEbiography}

\begin{IEEEbiography}%
  [{\includegraphics[width=1in,height=1.25in,keepaspectratio]{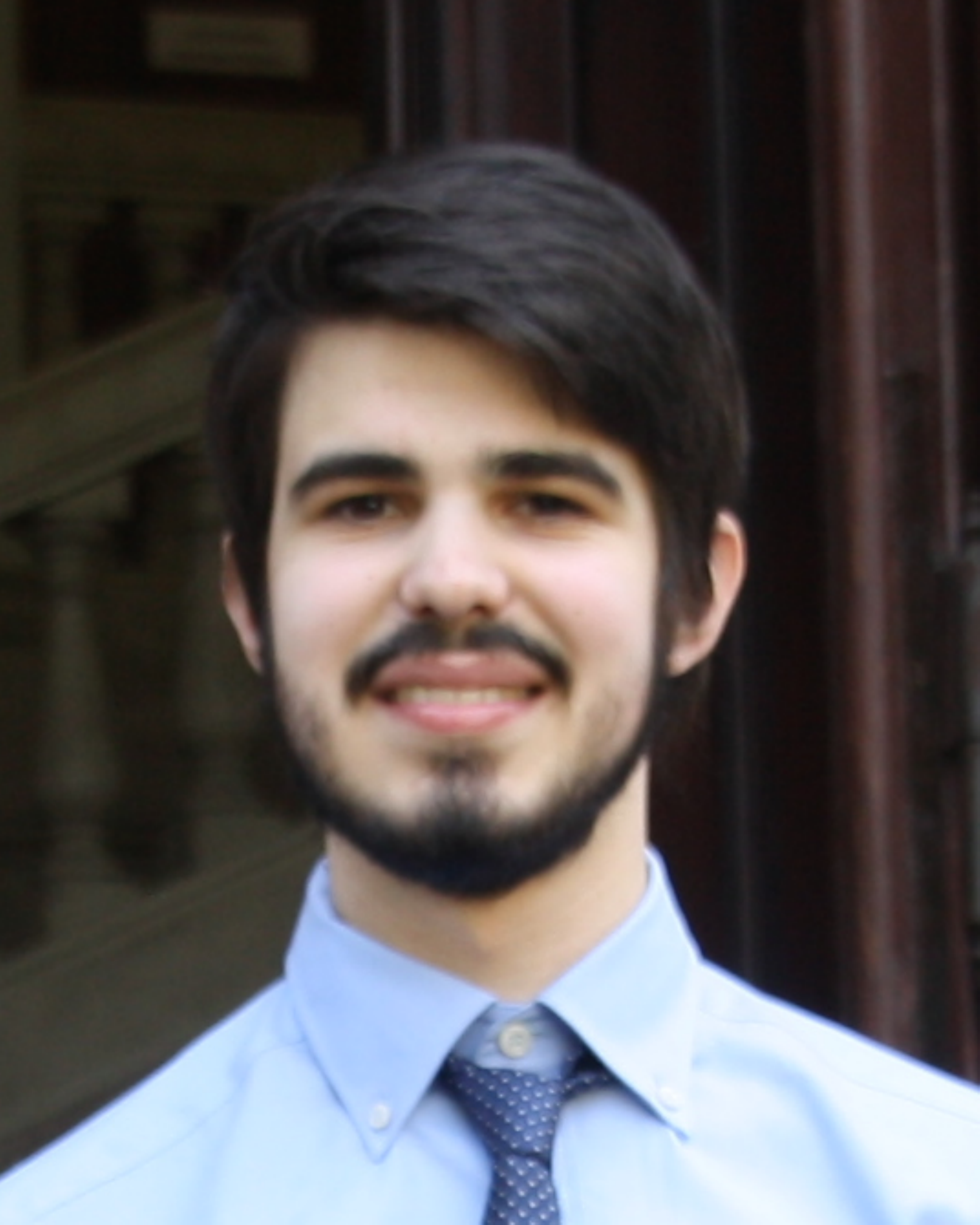}}]%
  {Giacomo Baggio} received the Ph.D. degree in Control Systems
  Engineering from the University of Padova in 2018. He is currently a
  PostDoctoral Scholar in the Department of Mechanical Engineering at
  the University of California at Riverside. From October 2015 to June
  2016, he was a Visiting Scholar in the Department of Engineering at
  the University of Cambridge. He was the recipient of the Best
  Student Paper Award at the 2018 European Control Conference. His
  current research interests lie in the area of analysis and control
  of dynamical~networks.
\end{IEEEbiography}

\begin{IEEEbiography}%
[{\includegraphics[width=1in,height=1.25in,clip,keepaspectratio]{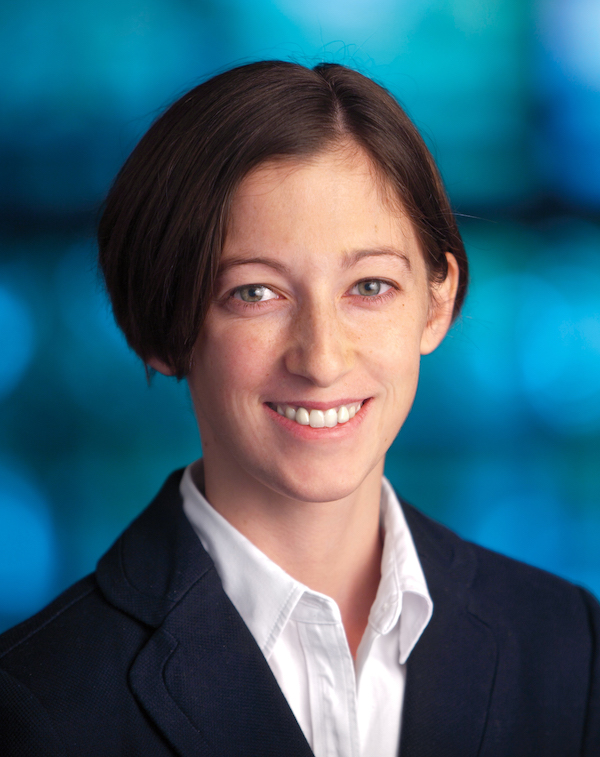}}]%
{Danielle S. Bassett}  is the Eduardo D. Glandt Faculty Fellow and Associate
  Professor in the Department of Bioengineering at the University of
  Pennsylvania. 
  She is most well known for her work blending neural
  and systems engineering to identify fundamental mechanisms of
  cognition and disease in human brain networks.
  She received a
  B.S. in physics from Penn State University and a Ph.D. in physics
  from the University of Cambridge, UK as a Churchill Scholar, and as
  an NIH Health Sciences Scholar. Following a postdoctoral position at
  UC Santa Barbara, she was a Junior Research Fellow at the Sage
  Center for the Study of the Mind. She has received multiple
  prestigious awards, including 
  American Psychological Association's
  'Rising Star' (2012), 
  Alfred P Sloan Research Fellow (2014),
  MacArthur Fellow Genius Grant (2014), Early Academic Achievement
  Award from the IEEE Engineering in Medicine and Biology Society
  (2015), Harvard Higher Education Leader (2015), Office of Naval
  Research Young Investigator (2015), National Science Foundation
  CAREER (2016), Popular Science Brilliant 10 (2016), Lagrange Prize
  in Complex Systems Science (2017), Erd\"os-R\'enyi  Prize in Network
  Science~(2018).
  She is the author of more than 200 peer-reviewed
  publications, which have garnered over 15900 citations, as well as
  numerous book chapters and teaching materials.
   \end{IEEEbiography}

\begin{IEEEbiography}%
  [{\includegraphics[width=1in,height=1.25in,keepaspectratio]{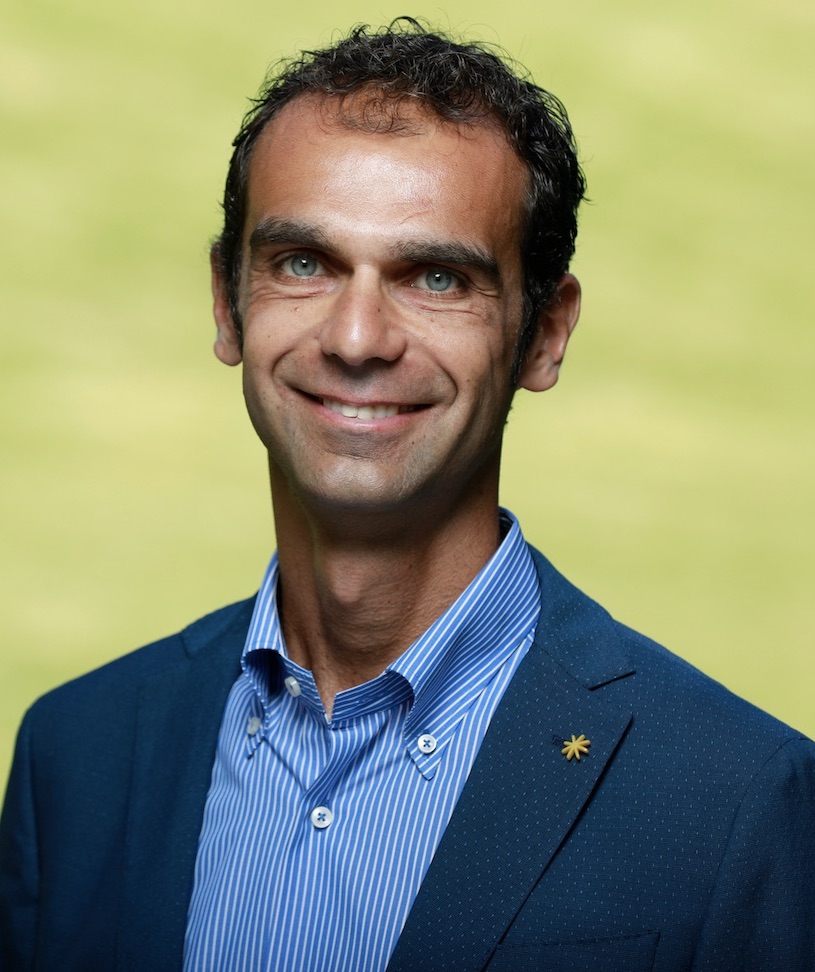}}]%
  {Fabio Pasqualetti} is an Assistant Professor in the Department of
  Mechanical Engineering, University of California at Riverside. He
  completed a Doctor of Philosophy degree in Mechanical Engineering at
  the University of California, Santa Barbara, in 2012, a Laurea
  Magistrale degree (M.Sc. equivalent) in Automation Engineering at
  the University of Pisa, Italy, in 2007, and a Laurea degree
  (B.Sc. equivalent) in Computer Engineering at the University of
  Pisa, Italy, in 2004. He has received several awards, including a
  Young Investigator Program award from ARO in 2017, and the 2016 TCNS
  Outstanding Paper Award from IEEE CSS. His main research interests
  include the analysis and control of complex networks, security of
  cyber-physical systems, distributed control, and network
  neuroscience.
\end{IEEEbiography}

\end{document}